\newcommand{\GG}{{\cal G}}
\newcommand{\CC}{{\cal C}}
\newcommand{\PP}{{\cal P}}
\newcommand{\RR}{{\cal R}}
\newtheorem{conjecture}{Conjecture}
\newtheorem{theorem}{Theorem}
\newtheorem{corollary}[theorem]{Corollary}
\newtheorem{lemma}[theorem]{Lemma}
\newtheorem{observation}[theorem]{Observation}
\newcommand{\tw}{\mbox{tw}}
\newcommand{\Tw}{\mbox{Tw}}
\newcommand{\Vs}{\mbox{Vs}}
\newcommand{\Be}{\mbox{Be}}
\newcommand{\Ex}{\mbox{Ex}}
\title{Sublinear separators, fragility and subexponential expansion}
\author{Zden\v{e}k Dvo\v{r}\'ak\thanks{Computer Science Institute, Charles University, Prague, Czech Republic. E-mail: {\tt rakdver@iuuk.mff.cuni.cz}.
Supported by project GA14-19503S (Graph coloring and structure) of Czech Science Foundation 
and by project LH12095 (New combinatorial algorithms - decompositions, parameterization, efficient solutions) of Czech Ministry of Education.}}
\date{}
\begin{document}
\maketitle

\begin{abstract}
Let $\GG$ be a subgraph-closed graph class with bounded maximum degree.
We show that if $\GG$ has balanced separators whose size is smaller than linear by a polynomial factor,
then $\GG$ has subexponential expansion. This gives a partial converse
to a result of Ne\v{s}et\v{r}il and Ossona de Mendez.  As an intermediate step, the proof uses a new kind
of graph decompositions.
\end{abstract}

The concept of graph classes with bounded expansion was introduced by Ne\v{s}et\v{r}il and Ossona de Mendez~\cite{grad1}
as a way of formalizing the notion of sparse graph classes.  Let us give a few definitions.

For a graph $G$, a \emph{$k$-minor of $G$} is any graph obtained from $G$ by contracting
pairwise vertex-disjoint subgraphs of radius at most $k$ and removing vertices and edges.  Thus, a $0$-minor is just a subgraph of $G$.
Let us define $\nabla_k(G)$ as $$\max\left\{\frac{|E(G')|}{|V(G')|}:\mbox{$G'$ is a $k$-minor of $G$}\right\}.$$
For a class $\GG$, let $\nabla_k(\GG)$ be the supremum of $\nabla_k(G)$ for $G\in \GG$ (or $\infty$ if $\nabla_k$ is unbounded for the graphs
in the class).  If $\nabla_k(\GG)$ is finite for every $k\ge 0$, we say that $\GG$ has \emph{bounded expansion};
and if $f$ is a function such that $f(k)\ge\nabla_k(\GG)$ for every $k\ge 0$, we say that $f$ \emph{bounds the expansion of $\GG$}.
If $\lim_{k\to\infty} \frac{\log \nabla_k(\GG)}{k}=0$, we say that $\GG$ has \emph{subexponential expansion}.

The definition is quite general---examples of classes of graphs with bounded expansion include proper minor-closed classes of graphs,
classes of graphs with bounded maximum degree, classes of graphs excluding
a subdivision of a fixed graph, classes of graphs that can be embedded
on a fixed surface with bounded number of crossings per edge and
many others, see~\cite{osmenwood}.

Importantly, bounded expansion also implies a wide range of
interesting structural and algorithmic properties, generalizing many results from proper minor-closed classes of graphs.
For example, graphs in any class with bounded expansion have bounded chromatic number, acyclic chromatic number,
star chromatic number, and other generalized variants of the chromatic number~\cite{grad1}.
For graphs from such a class, there exists a linear-time algorithm to test the presence of a fixed subgraph~\cite{grad2}
(as the subgraph testing problem is $W[1]$-complete when parameterized by the subgraph~\cite{fellows}, such an
algorithm is unlikely to exist for all graphs).  This algorithm was further generalized to testing any property
expressible in the first order logic~\cite{dvorak2013testing}.
Other related results include bounds on the growth function of classes with bounded expansion~\cite{smallcl},
and parameterized algorithmic results on induced matchings~\cite{induced} and dominating sets~\cite{domker}.
For a more in-depth introduction to the topic, the reader is referred to the book of Ne\v{s}et\v{r}il and Ossona de Mendez~\cite{nesbook}.

The bounds and the time complexity of the algorithms we mentioned in the previous paragraph of course depend on the function bounding the expansion
of the class; hence, it would be useful to be able to estimate this function for a given graph class.
However, while there is an extensive theory for qualitatively deciding whether a class of graphs has bounded
expansion~\cite{subdivchar,osmenwood}, we only know a tight estimate for the function bounding the expansion
for a few special classes of graphs (proper minor-closed classes, and the class of graphs with given maximum degree).

One of the properties of graph classes with bounded expansion that might lead to improving the estimates
is a connection to small balanced separators.
A \emph{separation} of a graph $G$ is a pair $(A,B)$ of edge-disjoint subgraphs of $G$ such that $A\cup B=G$,
and the \emph{size} of the separation is $|V(A)\cap V(B)|$.  Observe that $G$ has no edge with one end with $V(A)\setminus V(B)$
and the other end in $V(B)\setminus V(A)$, and thus the set $V(A)\cap V(B)$ separates $V(A)\setminus V(B)$ from
$V(B)\setminus V(A)$ in $G$.
A separation $(A,B)$ is \emph{balanced} if $|V(A)\setminus V(B)|\le 2|V(G)|/3$ and $|V(B)\setminus V(A)|\le 2|V(G)|/3$.
Note that $(G,G-E(G))$ is a balanced separation.
For a graph class $\CC$, let $s_\CC(n)$ denote the smallest nonnegative integer such that every graph in $\CC$ with at most $n$ vertices
has a balanced separation of size at most $s_\CC(n)$.  We say that $\CC$ has \emph{sublinear separators} if
$\lim_{n\to\infty} \frac{s_\CC(n)}{n}=0$, and that $\CC$ has \emph{strongly sublinear separators} if
there exist constants $c\ge 1$ and $0\le \delta<1$ such that $s_\CC(n)\le cn^\delta$ for every $n\ge 0$.

Lipton and Tarjan brought focus on the notion of sublinear separators by showing in~\cite{lt79} that the class $\CC_p$ of planar
graphs satisfies $s_{\CC_p}(n)=O(\sqrt{n})$, and by pointing out that sublinear separators
lead to a natural divide-and-conquer approach, useful especially in the design of efficient polynomial-time algorithms,
as well as of approximation algorithms and of exact algorithms with subexponential time complexity~\cite{lt80}. Since then, numerous similar applications were
found~\cite{ssep1,ssep2,ssep3,ssep4,ssep5,ssep6,ssep7}, establishing the importance of the concept.

Later, it was shown that graphs embedded on other surfaces~\cite{gilbert} and all proper minor-closed
graph classes~\cite{kreedsep} also have strongly sublinear separators. 
Building upon the previous result of Plotkin, Rao and Smith~\cite{plotkin},
Ne\v{s}et\v{r}il and Ossona de Mendez~\cite{grad2} made the following observation linking balanced separators
to bounded expansion, which qualitatively generalizes all the previous results (let us remark that
the classes studied in~\cite{lt79,gilbert,kreedsep} all have expansion bounded by a constant function).

\begin{theorem}[{Ne\v{s}et\v{r}il and Ossona de Mendez~\cite[Theorem 8.3]{grad2}}]\label{thm-subexpsep}
Every graph class with subexponential expansion has sublinear separators.
\end{theorem}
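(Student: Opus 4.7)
The plan is to combine a Plotkin--Rao--Smith style separator dichotomy with the hypothesis of subexponential expansion. The technical lemma I would aim at is: there is a constant $c$ such that every graph $G$ on $n$ vertices admits, for every integer $r\ge 1$, a balanced separation of size at most $c(n/r)(\nabla_r(G)+1)$, possibly with an extra $\log n$ factor. Given such a lemma, subexponential expansion yields $\nabla_r(\GG)\le 2^{\varepsilon(r)r}$ with $\varepsilon(r)\to 0$; choosing $r=r(n)\to\infty$ slowly enough that $2^{\varepsilon(r)r}/r\to 0$ (for instance, take $r(n)$ as the largest integer with $\varepsilon(r)r\le \tfrac12\log r$) then gives $s_\GG(n)/n\to 0$, which is exactly the claimed sublinearity.

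For the dichotomy itself, I would follow the BFS region-growing idea. Root a BFS in $G$ at a vertex $v$ and list its layers $L_0,L_1,\dots$. If some layer $L_i$ is small and the ball of radius $i-1$ around $v$ already contains roughly half of $V(G)$, the layer $L_i$ is the desired separator. Otherwise every window of $r$ consecutive layers must be heavy, and a pigeonhole argument over $\Omega(n/r)$ disjoint windows produces, after contracting shortest connecting paths inside one window, a depth-$r$ minor of density exceeding $\nabla_r(G)$, which is a contradiction. To turn an unbalanced cut produced by a BFS from a poorly chosen starting vertex into a balanced one, I would recurse on the heavier side and accumulate the separators; since each recursive call chops off a constant fraction of the vertex set, the total size stays within the same order as the bound from the single step.

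The main obstacle, I expect, is the accounting in the region-growing step: one must guarantee that the dense shallow minor constructed in the ``bad'' case genuinely has radius at most $r$ (not $2r$), so that it witnesses $\nabla_r(G)$ rather than $\nabla_{2r}(G)$, and that iterating the construction across many windows and many starting vertices does not cause the batons to overlap in a way that spoils disjointness of the contracted pieces. A secondary, milder issue is matching the choice of $r(n)$ to the expansion rate of $\GG$; any sufficiently slowly growing $r$ suffices qualitatively, but extracting a quantitative bound on $s_\GG(n)$ would require tracking $\varepsilon(r)$ explicitly, which is not needed for the conclusion as stated.
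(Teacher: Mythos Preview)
The paper does not give its own proof of this theorem: it is quoted as Theorem~8.3 of Ne\v{s}et\v{r}il and Ossona de Mendez~\cite{grad2}, and the paper explicitly remarks that it ``build[s] upon the previous result of Plotkin, Rao and Smith~\cite{plotkin}''. So there is nothing in the paper to compare your argument against, and your proposal is precisely the Plotkin--Rao--Smith region-growing approach that the paper attributes to the original source.

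One correction to your sketch: the form of the technical lemma you aim for, a balanced separator of size $c\,\frac{n}{r}\bigl(\nabla_r(G)+1\bigr)$, cannot be right as stated. For a planar $\sqrt{n}\times\sqrt{n}$ grid one has $\nabla_r(G)\le 3$ for all $r$, and your bound with $r=n$ would give a constant-size balanced separator, contradicting the $\Omega(\sqrt{n})$ lower bound. The correct shape of the Plotkin--Rao--Smith dichotomy is additive rather than multiplicative, roughly
\[
O\!\Bigl(\frac{n}{r}\Bigr)\;+\;O\!\bigl(r\cdot \text{poly}(\nabla_r(G))\cdot \text{poly}\log n\bigr),
\]
and it is this additive second term that forces a nontrivial optimisation over $r$. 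With that corrected form your final paragraph goes through: under subexponential expansion one can pick $r=r(n)\to\infty$ slowly enough that both terms are $o(n)$. The obstacles you anticipate (radius $r$ versus $2r$, disjointness of contracted balls, accumulation under recursion) are the genuine technical content of~\cite{plotkin} and~\cite{grad2}, but they are handled there along exactly the lines you describe.
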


Theorem~\ref{thm-subexpsep} can be used to establish a lower bound on the expansion function of a class,
and it cannot be significantly improved, since 3-regular expanders have expansion bounded by $f(k)=2^k$ and do not have sublinear separators.
In this paper, we indicate that Theorem~\ref{thm-subexpsep} might actually be an almost precise
characterization of classes of graphs with sublinear separators (or, alternatively, of classes of graphs with subexponential
expansion) by proving its weak converse.

\begin{theorem}\label{thm-main}
Let $\GG$ be a subgraph-closed class of graphs with bounded maximum degree.
If $\GG$ has strongly sublinear separators, then there exists $\gamma\ge 0$ such that the expansion of $\GG$ is bounded by $f(k)=\gamma e^{k^{3/4}}$.
Hence, $\GG$ has subexponential expansion.
\end{theorem}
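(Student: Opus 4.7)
The plan is to establish the bound in two stages, mediated by the ``fragility'' notion from the title.

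First I would show that strongly sublinear separators imply the following fragility property of $\GG$: for every $G\in\GG$ on $n$ vertices and every integer $r\ge 1$, there is a set $X\subseteq V(G)$ with $|X|\le n/r$ such that every connected component of $G-X$ has at most $\phi(r)$ vertices, where $\phi(r)=O(r^{1/(1-\delta)})$. The proof is a routine recursion: repeatedly split each piece using the balanced separator theorem until all pieces have at most $s$ vertices, and observe that the separator contributions at successive levels sum to $O(n/s^{1-\delta})$. Choosing $s$ so that this sum is at most $n/r$ yields the required $\phi$. The subgraph-closure of $\GG$ is essential, since it guarantees that the separator bound applies at every level of the recursion.

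Applying fragility iteratively at a carefully chosen sequence of scales then produces a hierarchical decomposition of $G$: a chain of skeletons $X_1\subseteq X_2\subseteq\cdots$ together with, for each $i$, a partition of $G-X_i$ into components of size at most $\phi(r_i)$, each of which is further decomposed at the next level. Each vertex carries a level label indicating the coarsest scale at which it first appears as a separator vertex. This multi-scale object is the ``new kind of graph decomposition'' advertised in the abstract. Designing it so that it simultaneously respects the sublinear-separator property and remains compatible with the short-range geometry of branch sets in a $k$-minor is the central technical obstacle.

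Given the decomposition, I would bound $\nabla_k(G)$ by analysing an arbitrary $k$-minor $G'$ of $G$, realised in a subgraph $H\subseteq G$. Because $\GG$ has maximum degree $\Delta$, each branch set has radius at most $k$ and hence at most $\Delta^{k+1}$ vertices. I would classify branch sets by the highest decomposition level they meet and count the edges of $G'$ level by level: edges incident to a branch set meeting $X_i\setminus X_{i-1}$ are controlled by $\Delta^{k+2}\cdot|X_i|$, while the remaining edges are confined within components of size at most $\phi(r_i)$ at the next finer level, and so can be bounded inductively. Summing over levels and optimizing the scales $r_i$ together with the depth of the hierarchy against the branch-set capacity $\Delta^k$ yields the bound $\nabla_k(G)\le\gamma e^{k^{3/4}}$; the exponent $3/4$ arises from the balance between the geometric growth of $\phi(r_i)$ and the exponential-in-$k$ upper bound on branch-set sizes.

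The main obstacle is precisely this gap: a single-scale argument---applying one balanced separator to the subgraph spanned by the branch sets of a dense $k$-minor and invoking the fact that $K_t$ admits no balanced separator of size less than $t/3$---yields only $\nabla_k(G)\le\Delta^{O(k)}$, which is exponential in $k$. Pushing the exponent below $1$ requires iterating the separator property at roughly $k^{1/4}$ scales and tracking carefully how radius-$k$ branch sets distribute across the hierarchy, ensuring that no level accumulates too much density. I expect this tight multi-scale parameter balance, rather than any individual lemma, to be the hardest step of the proof.
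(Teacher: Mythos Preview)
The paper's proof is structured differently and rests on an ingredient your sketch omits. After establishing \emph{fractional} $\Vs$-fragility (Lemma~\ref{lemma-sublind})---a probability distribution over sets $X$ with $\text{Prob}[v\in X]\le\varepsilon$ for every vertex and with every component of $G-X$ of size at most $b^{1/\varepsilon}$---an averaging argument over the length-$(2k+2)$ paths realising edges of a $k$-minor (Lemma~\ref{lemma-bexp}), applied with $\varepsilon=\tfrac{1}{4k+4}$, reduces bounding $\nabla_k(\GG)$ to bounding $\nabla_k$ on graphs in $\GG$ with at most $b^{O(k)}$ vertices. That last bound (Lemma~\ref{lemma-smallexp}) is obtained by a clique-minor/expander argument you do not mention: if such a graph had $\nabla_k\ge e^{k^{3/4}}$, an iterated densification theorem (Theorem~\ref{thm-iter3}, via Corollary~\ref{cor-iter2} and Lemma~\ref{lemma-jiang}) would produce $K_t$ with $t\approx e^{k^{3/4}/6}$ as a shallow minor, hence a shallow subdivision of a $3$-regular expander as a subgraph; but such a subdivision admits no balanced separator of size $O(t^\psi)$ for $\psi<1$ (Lemma~\ref{lemma-expsep}), contradicting the separator hypothesis on $\GG$. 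The exponent $3/4$ comes from the constraint $\delta>2-2\delta$ in Theorem~\ref{thm-iter3}, which balances the depth $4^m$ (with $m\approx k^{2(1-\delta)}$) of the iterated minor against the clique size $e^{\Theta(k^\delta)}$; it has nothing to do with a count of hierarchy levels.

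Your multi-scale counting, as written, cannot close without an analogue of that small-graph lemma. In your notation the bound reads $|E(G')|/h\le\Delta^{2k+3}\sum_i 1/r_i + s_L$, where $s_L=\phi(r_L)=\Theta\bigl(r_L^{1/(1-\delta)}\bigr)$ is the terminal component size and the only a~priori control on the residual $k$-minor's density inside such a component is its vertex count $s_L$. For the right-hand side to be at most $e^{k^{3/4}}$ you would need simultaneously $\sum_i 1/r_i\le e^{k^{3/4}}\Delta^{-(2k+3)}$ and $r_L\le e^{(1-\delta)k^{3/4}}$; but the second already forces $\sum_i 1/r_i\ge 1/r_L\ge e^{-(1-\delta)k^{3/4}}$, which for large $k$ is exponentially larger than the first inequality allows. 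No choice of scales or of the number of levels resolves this tension. Your final paragraph correctly diagnoses that a single scale yields only $\Delta^{O(k)}$, but stacking scales does not improve on it; what the paper does at the bottom of the reduction is abandon decomposition entirely and invoke the shallow-clique/expander obstruction to strongly sublinear separators.
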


Theorem~\ref{thm-main} is the first general criterion implying subexponential expansion that has been found so far,
and indeed, one of the first results giving a reasonably small upper bound on the expansion function of a class of graphs.
The assumption that $\GG$ is subgraph-closed is natural, excluding dense graphs with balanced separators (such as two
cliques of the same size).  Unlike the outcome of Theorem~\ref{thm-subexpsep}, we require strongly sublinear separators; however,
this stronger assumption holds in most natural examples of graph classes known to have sublinear separators.
Also, such an assumption cannot be avoided entirely: consider for example the class $\GG$ consisting of all graphs $G$
such that the distance in $G$ between any two vertices of degree at least $3$ is at least $\log |V(G)|$.
The class $\GG$ satisfies $s_\GG(n)=O(n/\log n)$, but it has exponential expansion.

The major flaw in Theorem~\ref{thm-main} is the assumption on bounded maximum degree, which severely restricts its applicability.
While it is required in the proof, I have no reason to believe that it should be necessary and I propose the following conjecture.

\begin{conjecture}
Every subgraph-closed class of graphs with strongly sublinear separators has subexponential expansion.
\end{conjecture}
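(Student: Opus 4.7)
The natural first attempt is to reduce the conjecture to Theorem~\ref{thm-main} by replacing each vertex $v$ of large degree in $G\in\GG$ with a gadget of bounded maximum degree, for concreteness a balanced binary tree whose leaves are the original neighbors of $v$, and to show that the resulting class $\GG'$ still has strongly sublinear separators and controls the expansion of $\GG$. Separator inheritance is not too painful in either direction: a balanced separator of $G'$ projects back to a separator of $G$ of comparable size by collapsing each gadget to its root, and conversely a separator of $G$ lifts to one of $G'$ at the cost of a constant factor. The expansion comparison is also conceptually clean, since any connected subgraph of $G$ of radius $k$ lifts to a connected subgraph of $G'$ of radius at most $k+d$, where $d$ is the maximum depth of the gadgets it meets.

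The main obstacle, and presumably the reason Theorem~\ref{thm-main} currently requires bounded maximum degree, is exactly the choice of $d$. Shallow gadgets (constant $d$) leave the maximum degree of $G'$ unbounded, defeating the reduction; binary-tree gadgets have $d=\Theta(\log n)$, which inflates the target bound $\gamma e^{k^{3/4}}$ into $\gamma e^{(k+\log n)^{3/4}}$, a quantity that grows with $n$ and therefore says nothing about $\nabla_k(\GG)$. Any reduction along these lines must simultaneously use gadgets of sublogarithmic depth and keep the maximum degree bounded by a constant, which appears contradictory for vertices of polynomial degree.

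I would therefore not reduce to Theorem~\ref{thm-main} but instead adapt its proof. The decomposition underlying Theorem~\ref{thm-main} presumably uses bounded maximum degree only to control the sizes of balls of radius $k$, and hence to bound how many branch sets of a shallow minor can meet a common vertex. In a graph forced to be sparse by the separator assumption (in particular, with $O(n)$ edges), vertices of degree above a threshold $t$ number at most $O(n/t)$, so a refined charging in which such vertices are treated as distinguished ``hubs'' whose cost is amortized over their incident edges should recover a useful density bound on $k$-minors. Concretely, one would modify the new graph decomposition mentioned in the abstract so that hub vertices are forced into the small bag of every cover element, while the radius of each contracted subgraph is measured in the graph with hubs deleted.

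The hard part will be making this refinement compatible with the iterative step of the decomposition: a single high-degree hub can lie in many branch sets of a $k$-minor simultaneously, and when the decomposition is recursed to deal with deeper minors, these shared hubs couple pieces that the bounded-degree argument keeps essentially independent. Controlling this coupling seems to require either a new isoperimetric inequality for graphs with strongly sublinear separators and arbitrary maximum degree, or a two-level decomposition that treats hubs and non-hubs on separate scales; both routes appear to demand tools genuinely beyond those in the proof of Theorem~\ref{thm-main}, which is why I expect the conjecture to remain the main open problem suggested by this paper.
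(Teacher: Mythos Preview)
The statement you were asked to prove is a \emph{conjecture}: the paper does not prove it, and indeed explicitly presents it as the main open problem left by Theorem~\ref{thm-main}. There is therefore no ``paper's own proof'' to compare against. Your submission is not a proof either; it is a discussion of possible lines of attack together with an explanation of why each seems to fall short, and you yourself conclude that the conjecture should remain open. In that sense your assessment agrees with the paper's.

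A few technical remarks on the content of your discussion. First, your gadget-replacement idea and the depth-versus-degree obstruction are sound; this is essentially why the bounded-degree hypothesis cannot be removed by a black-box reduction to Theorem~\ref{thm-main}. Second, your intuition about where bounded degree is actually used is slightly off: in the paper, bounded maximum degree enters not in the shallow-minor density argument of Section~\ref{sec-small} (that part works in full generality), but in Lemma~\ref{lemma-sublind}, where it is needed to bound the bag sizes in the tree decomposition of Lemma~\ref{lemma-few} and hence to control the component sizes after deletion. Third, your sentence that ``a single high-degree hub can lie in many branch sets of a $k$-minor simultaneously'' is literally false, since branch sets are vertex-disjoint; presumably you mean that a hub can be \emph{adjacent} to many branch sets, or that in the iterated decomposition a hub couples many pieces. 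This does not affect your overall point, but you should be precise if you develop the idea further.
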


The proof of Theorem~\ref{thm-main} proceeds by contradiction, showing that a class with (nearly) exponentially large expansion
cannot have strongly sublinear separators.  The proof has two main ingredients.  Firstly, it is relatively easy to deal with the situation when for arbitrarily large $n$,
$\GG$ contains an $n$-vertex graph $G$ such that $\nabla_{\log n}(G)\ge n^\varepsilon$ for some $\varepsilon>0$, and to show that $G$ has a subgraph without sufficiently
small balanced separation.  This is based on (a generalization of) the following theorem on shallow clique minors, which is of a separate interest
in the context of previous results on the topic~\cite{kopyb,jiang}.
\begin{theorem}\label{thm-lacli}
For every $\varepsilon$ such that $0<\varepsilon\le 1$, there exist integers $n_0,d\ge 0$ such that
if a graph $G$ on $n\ge n_0$ vertices has at least $n^{1+\varepsilon}$ edges, then it contains
$K_{\lfloor n^{\varepsilon/6}\rfloor}$ as a $d$-minor.
\end{theorem}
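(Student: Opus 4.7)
The plan is to pass to a high-minimum-degree subgraph and then find the clique minor inside a small-radius ball, which automatically makes it shallow.

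Since $|E(G)|\ge n^{1+\varepsilon}$, the average degree of $G$ is at least $2n^\varepsilon$; the standard iterative deletion of vertices of degree below $n^\varepsilon$ (which cannot decrease the average degree) produces a subgraph $G_0\subseteq G$ with $\delta(G_0)\ge n^\varepsilon$.

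I would then locate a vertex $v$ of $G_0$ and a constant $r=r(\varepsilon)$ such that the induced subgraph $H=G_0[B_r(v)]$ is dense enough to yield a large clique minor via a classical density-to-minor theorem. The key structural observation making $H$ useful is that $H$ has diameter at most $2r$ in itself: any two vertices of $B_r(v)$ are joined through $v$ by a path of length at most $2r$ lying entirely in $B_r(v)$. An averaging argument using $\delta(G_0)\ge n^\varepsilon$ (which forces $|B_{r-1}(x)|\ge n^\varepsilon+1$ for every $x$, provided $r\ge 2$) gives
\[\sum_v |E(G_0[B_r(v)])|\ge \tfrac12\sum_{xy\in E(G_0)}\bigl(|B_{r-1}(x)|+|B_{r-1}(y)|\bigr)=\tfrac12\sum_x\deg(x)\,|B_{r-1}(x)|\ge \tfrac12\,n^{1+2\varepsilon},\]
so some ball $B_r(v)$ carries $\Omega(n^{2\varepsilon})$ internal edges. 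Applying the Kostochka--Thomason density-to-minor theorem inside $H$ then produces $K_t$ as a minor of $H$ whose branch sets have radius at most $r$ in $G_0$ and hence in $G$, giving the required $r$-minor of $G$. The slack between the natural exponent $\varepsilon/2$ (from $\sqrt{D}$ with $D=n^\varepsilon$) and the target exponent $\varepsilon/6$ is precisely what absorbs polylogarithmic losses in Kostochka--Thomason and the gap between $|V(H)|$ and $n$.

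The main obstacle is controlling the size of the dense ball: the averaging above exhibits a ball with $\Omega(n^{2\varepsilon})$ internal edges but does not on its own prevent that ball from having nearly $n$ vertices, in which case the average degree of $H$ could be as low as $n^{2\varepsilon-1}$, far too small. To force a favourable size-to-density ratio one must exploit the minimum-degree hypothesis a second time, for instance through a BFS-layer count showing that a min-degree-$n^\varepsilon$ graph has eccentricity at most $O(n^{1-\varepsilon})$, so not all constant-radius balls can simultaneously cover almost the entire vertex set. Combining this with the averaging should produce a ball whose average degree is polynomial in its size, which is the refined ``dense ball'' lemma presumably constituting the ``generalization of'' the results of Kostochka--Pyber and Jiang that the author alludes to.
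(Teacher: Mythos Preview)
Your high-level strategy---locate a small-diameter induced subgraph $H$ dense enough that Kostochka--Thomason yields a $K_{n^{\varepsilon/6}}$ minor, which is then automatically shallow because $H$ has bounded diameter---is viable, but your execution has a genuine gap and the patch you propose does not close it. The averaging inequality you write down only guarantees a ball with $\Omega(n^{2\varepsilon})$ internal edges; it says nothing about the \emph{ratio} of edges to vertices in that ball, and if the ball happens to have $\Theta(n)$ vertices its average degree can be as low as $n^{2\varepsilon-1}$. Your suggested remedy, that ``a min-degree-$n^\varepsilon$ graph has eccentricity at most $O(n^{1-\varepsilon})$, so not all constant-radius balls can simultaneously cover almost the entire vertex set,'' is simply false: when the diameter is small, \emph{every} constant-radius ball is the whole vertex set. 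What actually rescues the approach is a different observation you do not make: starting from any vertex $v$, the sequence $|B_0(v)|,|B_1(v)|,\ldots$ cannot grow by a factor exceeding $n^{\varepsilon/2}$ for more than $\lceil 2/\varepsilon\rceil$ consecutive steps (else it would exceed $n$), so some $r\le \lceil 2/\varepsilon\rceil$ satisfies $|B_{r+1}(v)|\le n^{\varepsilon/2}|B_r(v)|$. Every vertex of $B_r(v)$ has all its $\ge n^\varepsilon$ neighbours inside $B_{r+1}(v)$, so $H=G_0[B_{r+1}(v)]$ has average degree at least $n^{\varepsilon/2}$, and Kostochka--Thomason now gives $K_t$ with $t\ge c\,n^{\varepsilon/2}/\sqrt{\log n}\ge n^{\varepsilon/6}$ as a minor of $H$, hence as a $d$-minor of $G$ with $d=O(1/\varepsilon)$.

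For comparison, the paper does \emph{not} use a dense-ball argument at all. Its route is an iterated densification: a single-step lemma (Lemma~\ref{lemma-denser}) shows, via a random bipartition and random thinning of one side followed by star contractions, that a graph with $\Omega(t^4 n^{1+\varepsilon})$ edges either already contains $K_t$ as a $4$-minor (through Jiang's $K'_t$-subgraph lemma) or has a $1$-minor with density exponent boosted from $1+\varepsilon$ to $1+\varepsilon+\varepsilon^2$. Iterating this $m=O(1/\varepsilon^2)$ times pushes the exponent to $3/2$, at which point Jiang's lemma applies directly; the resulting depth is $d=4^{O(1/\varepsilon^2)}$. So once repaired, your approach is genuinely different, more elementary (no probabilistic step), and yields a substantially better depth bound $d=O(1/\varepsilon)$; the paper's method, on the other hand, stays closer to the Kostochka--Pyber/Jiang line and avoids invoking the full Kostochka--Thomason theorem as a black box.
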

A variant of Theorem~\ref{thm-lacli} was proved in my dissertation thesis~\cite{dvorak}, and we give a somewhat simplified version of the proof in
Section~\ref{sec-small}.  This part of the argument does not require bounded maximum degree.

It remains to consider the case that $\nabla_{\log n}(G)\ll n^\varepsilon$, and thus for a large $k\ge 0$, the graph $G$ showing that $\nabla_k(\GG)$ is nearly exponential in $k$ has
many vertices (compared to any exponential in $k$).  We would like to split $G$ to components whose size makes it possible to
apply the result of the previous paragraph, by removing a small part of $G$.  Lipton and Tarjan~\cite{lt80} show that if $\GG$ has strongly sublinear
separators, then we can remove some set $S$ of vertices of $G$ of sublinear size so that each component of $G-S$ has bounded size.
However, we cannot directly apply this result, since some important part of the subgraph of $G$ determining $\nabla_k(G)$ could be contained in $S$.
Hence, as the second main ingredient, we need a strengthening of the result, showing that there exist many possible ``almost disjoint'' choices for $S$.

\begin{lemma}\label{lemma-split}
Let $\GG$ be a subgraph-closed class of graphs, with bounded maximum degree and strongly sublinear separators.
There exists $b>1$ with the following property.  For every $\varepsilon>0$ and every $G\in\GG$, there
exists some $m\ge 0$ and (not necessarily distinct) sets $S_1, \ldots, S_m\subseteq V(G)$ such that
each vertex of $G$ is contained in at most $\varepsilon m$ of these sets and such that for $1\le i\le m$,
each component of $G-S_i$ has at most $b^{1/\varepsilon}$ vertices.
\end{lemma}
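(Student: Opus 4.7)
Let $\Delta$ bound the maximum degree of graphs in $\GG$, and choose $c\ge 1$ and $\delta\in[0,1)$ so that $s_\GG(n)\le cn^\delta$. Fix $\varepsilon>0$ and $G\in\GG$ with $n$ vertices, set $t:=b^{1/\varepsilon}$ for a constant $b=b(c,\delta,\Delta)$ to be chosen at the end, and call $S\subseteq V(G)$ a \emph{$t$-separator} if every component of $G-S$ has at most $t$ vertices. My plan is to show that for every probability measure $\mu$ on $V(G)$ there is a $t$-separator $S_\mu$ with $\mu(S_\mu)\le\varepsilon/2$, then apply von Neumann's minimax theorem together with the probabilistic method to conclude.

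The unweighted case $\mu\equiv 1/n$ is standard: iterate the strongly sublinear separator in the Lipton--Tarjan style, recursively splitting each subgraph by a balanced separator of size $c|V|^\delta$ and stopping once every piece has at most $t$ vertices; the total removed set has size $O(n/t^{1-\delta})$, which is at most $\varepsilon n/2$ provided $b$ is chosen large enough in terms of $c$ and $\delta$.

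For arbitrary $\mu$ I would use a vertex blow-up reduction. Approximating $\mu(v)=k_v/N$ by integer multiplicities, replace each $v\in V(G)$ by a path $P_v$ on $k_v$ vertices and each edge $uv\in E(G)$ by a single edge between an endpoint of $P_u$ and an endpoint of $P_v$; the resulting graph $\hat G$ has $N$ vertices and maximum degree at most $\Delta+2$. A key side lemma, which I expect to be the main technical obstacle, asserts that $\hat G$ also has strongly sublinear separators, with constants degraded by at most a constant factor, even though $\hat G$ typically does not lie in $\GG$. The point is that the paths $P_v$ are trivially separable, so separators of $\hat G$ can be built from separators of $G$ together with cheap ``path-halving'' sets; this is where the bounded maximum degree of $\GG$ is essential, as it bounds the total blow-up per edge. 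Granting this side lemma, applying the iteration of the previous paragraph to $\hat G$ with target size $t/(2\Delta+2)$ produces $\hat X\subseteq V(\hat G)$ with $|\hat X|\le\varepsilon N/4$. Setting $S:=\{v\in V(G):|\hat X\cap P_v|\ge k_v/2\}$ yields $\mu(S)\le 2|\hat X|/N\le\varepsilon/2$, while each component of $G-S$ is the image of at most $\Delta+1$ components of $\hat G-\hat X$ glued along at most $\Delta+1$ vertices and hence has at most $t$ vertices.

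Finally, the existence of a $t$-separator of small $\mu$-mass for every $\mu$ is, by von Neumann's minimax theorem, equivalent to the existence of a probability distribution $\mathcal D$ on $t$-separators with $\Pr_{S\sim\mathcal D}[v\in S]\le\varepsilon/2$ for every $v\in V(G)$. Drawing $m=\Theta(\varepsilon^{-2}\log n)$ independent samples from $\mathcal D$ and applying a Chernoff bound together with a union bound over $V(G)$ produces, with positive probability and hence deterministically, the desired sets $S_1,\dots,S_m$.
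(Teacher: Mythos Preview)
Your minimax reduction and the final Chernoff step are correct, and proving ``for every probability measure $\mu$ there is a $t$-separator of $\mu$-mass at most $\varepsilon/2$'' is a legitimate alternative route to the paper's construction. The gap is in the projection from $\hat X$ back to $S$: the assertion that each component of $G-S$ is the image of at most $\Delta+1$ components of $\hat G-\hat X$ is false. Take $G=C_n$ with $n\gg t$ and any $\mu$ whose integer approximation gives all $k_v$ within a factor of two of some $K\ge 2$ (for instance a mild perturbation of the uniform measure), so that $\hat G$ is a cycle of length $N=\sum_v k_v\approx nK$. Choosing $\hat X$ to be every $\lfloor t/6\rfloor$-th vertex along $\hat G$ is a valid output of the recursive separator procedure: pieces have size at most $t/6$ and $|\hat X|\approx 6N/t\le\varepsilon N/4$. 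But each $P_v$ then contains about $6k_v/t<k_v/2$ points of $\hat X$ once $t>24$, so your rule gives $S=\emptyset$, and $G-S=G$ is a single component with $n>t$ vertices. The structural reason is that a vertex $v$ with $k_v$ large can absorb many cuts without entering $S$; for adjacent $u,v\notin S$ nothing then forces surviving vertices of $P_u$ and $P_v$ to lie in the same component of $\hat G-\hat X$, since the $\hat G$-path between them may be interrupted by a cut in the interior of $P_u$ or $P_v$. Chaining this along $G$, one component of $G-S$ can require unboundedly many components of $\hat G-\hat X$ to cover it.

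For comparison, the paper never passes through an adversarial $\mu$. Using the treewidth bound $\tw(G)=O(n^\delta)$ of Theorem~\ref{thm-dnorin}, it builds (Lemma~\ref{lemma-few}) a rooted tree decomposition with bags of size $O(n^\delta)$ in which every vertex lies on at most $2+4\log(\Delta+1)$ consecutive levels; removing the bags at a uniformly random level modulo $k$ yields components of size $O(n^{\delta+\iota})$ while hitting each vertex with probability $O_\Delta(1/k)$, and iterating geometrically drives the component size down to $b^{1/\varepsilon}$. Note that this already \emph{is} a distribution over $t$-separators with per-vertex probability at most $\varepsilon$, hence by minimax (read in the opposite direction) it implies the $\mu$-statement you were aiming for; if you want to keep your framework, the fix is to replace the blow-up step by this layering argument.
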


The proof of Lemma~\ref{lemma-split} is given in Section~\ref{sec-split}, and the results are combined to a proof of Theorem~\ref{thm-main}
in Section~\ref{sec-subexpexp}.  The notion of existence of a large number of almost disjoint subsets
whose removal ensures some property can be viewed as a fractional version of a certain previously studied concept;
we establish this connection in Section~\ref{sec-fragility}.
The notion may be of separate interest because of its potential algorithmic applications.  Although tangential to the topic of the paper,
we give more details in Section~\ref{sec-fragilityap}.

\section{Probabilistic results}\label{sec-probab}

We will use several tools from the probability
theory, which we quickly recall here; for a more in-depth treatment see e.g.~\cite{mat}.

A \emph{finite probability space} is a finite set $S$ together with a \emph{probability distribution} $\tau:S\to [0,1]$
such that $\sum_{s\in S} \tau(s)=1$.  An \emph{event} $T$ is a subset of $S$, and its probability $\text{Prob}(T)$ is
$\sum_{s\in T}\tau(s)$.  For a unary predicate $\varphi$, we write $\text{Prob}[\varphi]$ as a shortcut for
$\text{Prob}(\{s\in S:\varphi(s)\})$.  A \emph{random variable} is any function $X:S\to\mathbf{R}$, and its \emph{expected value}
is $E(X)=\sum_{s\in S} \tau(s)X(s)$.

We use several basic inequalities, such as Markov's inequality (see~\cite{mat}, Lemma~4.0.2).
\begin{lemma}\label{lemma-markov}
Let $X$ be a non-negative random variable.  For any positive real number $r$,
$$\text{Prob}[X\ge r]\le E(X)/r$$ and if $E(X)>0$, then
$$\text{Prob}[X>r]<E(X)/r.$$
\end{lemma}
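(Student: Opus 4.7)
The plan is to unfold the definition of expectation directly and split the sum at the threshold $r$. First I would write $E(X) = \sum_{s \in S} \tau(s) X(s)$ and, using that $X$ is non-negative (so every term is non-negative), discard all indices with $X(s) < r$ to obtain $E(X) \ge \sum_{s : X(s) \ge r} \tau(s) X(s)$. Then, since $X(s) \ge r$ on the remaining index set, I would replace each $X(s)$ by $r$, getting $E(X) \ge r \sum_{s : X(s) \ge r} \tau(s) = r \cdot \text{Prob}[X \ge r]$. Dividing by the positive real number $r$ yields the first inequality.

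For the strict version I would split into two cases. If $\text{Prob}[X > r] = 0$, then $E(X)/r$ is strictly positive by the hypothesis $E(X) > 0$, so the desired inequality holds trivially. Otherwise, there is at least one $s^* \in S$ with $\tau(s^*) > 0$ and $X(s^*) > r$. Repeating the chain above with the event $\{X > r\}$ in place of $\{X \ge r\}$, the step replacing $X(s)$ by $r$ is strict at $s^*$ (and weak elsewhere), and the presence of that single strict term with positive weight $\tau(s^*)$ forces $E(X) > r \cdot \text{Prob}[X > r]$.

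I do not anticipate any serious obstacle; the whole argument is just careful bookkeeping on a finite sum, using only non-negativity of $X$ and the definitions of $E(X)$ and $\text{Prob}$. The only points that require slight care are isolating the index $s^*$ to guarantee that exactly one inequality in the chain becomes strict, and handling the degenerate case $\text{Prob}[X > r] = 0$ separately so that the hypothesis $E(X) > 0$ is actually used.
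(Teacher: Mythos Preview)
Your argument is correct: the first inequality is the standard one-line derivation, and your case split for the strict version is clean and handles both the degenerate case (where the hypothesis $E(X)>0$ is needed) and the non-degenerate case (where a single index $s^*$ with $\tau(s^*)>0$ and $X(s^*)>r$ forces strictness in the bounding step). Note that the paper does not supply its own proof of this lemma at all---it simply cites it as a standard fact from \cite{mat}---so there is nothing to compare against; your write-up is a perfectly acceptable self-contained justification.
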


Random variables $X_1$, \ldots, $X_n$ are \emph{independent} if for all measurable sets $A_1, \ldots, A_n\subseteq\mathbf{R}$,
we have $$\text{Prob}[X_1\in A_1, \ldots, X_n\in A_n]=\text{Prob}[X_1\in A_1]\text{Prob}[X_2\in A_2]\ldots\text{Prob}[X_n\in A_n].$$
We need the following corollary of Chernoff's bound (see~\cite{mat}, Theorem~7.2.1).

\begin{lemma}\label{lem-chernoff}
Let $X_1$, \ldots, $X_n$ be independent random variables, each of them
attaining value $1$ with probability $p$, and having value $0$ otherwise.
Let $X=X_1+\ldots+X_n$.  Then
$$\text{Prob}\bigl[\,X\le np/2\,\bigr] < \exp\Bigl(-\frac{3np}{28}\Bigr).$$
Furthermore, for any real number $r\ge n$,
$$\text{Prob}\bigl[\,X\ge 2rp\,\bigr] < \exp\Bigl(-\frac{3rp}{8}\Bigr).$$
\end{lemma}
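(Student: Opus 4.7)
The plan is to derive both inequalities as immediate specializations of the standard multiplicative Chernoff bound for sums of independent Bernoulli variables, in the form
$$\text{Prob}[X \le (1-\delta)\mu] \le \bigl(e^{-\delta}/(1-\delta)^{1-\delta}\bigr)^\mu, \qquad \text{Prob}[X \ge (1+\delta)\mu] \le \bigl(e^\delta/(1+\delta)^{1+\delta}\bigr)^\mu,$$
which is the content of Theorem~7.2.1 of~\cite{mat} (here $\mu = E(X) = np$).

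For the first inequality I would set $\delta = 1/2$, so that the lower-tail bound above rearranges to $\text{Prob}[X \le np/2] \le \exp\bigl(-\tfrac{1}{2}(1-\ln 2)\,np\bigr)$. Since $\tfrac{1}{2}(1-\ln 2) \approx 0.153$ exceeds $3/28 \approx 0.107$ by a comfortable margin, the desired bound follows.

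For the second inequality I would set $\delta = 2r/n - 1$, which satisfies $\delta \ge 1$ precisely because $r \ge n$, and is chosen so that $(1+\delta)\mu = 2rp$. Plugging this into the upper-tail bound and dividing the resulting exponent by $rp$ produces the expression $f(r) := 2\bigl(1 - \ln(2r/n)\bigr) - n/r$; a short calculation shows $f$ is strictly decreasing in $r$ on $[n,\infty)$, which reduces the verification to $r = n$, where $f(n) = 1 - 2\ln 2 \approx -0.386$ lies comfortably below $-3/8$.

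The whole argument is essentially bookkeeping, so no genuine obstacle arises: the standard Chernoff bound leaves slack against the slightly weaker constants $3/28$ and $3/8$ stated in the lemma. The only thing that one needs to be mildly careful about is obtaining the strict inequality ``$<$'' rather than ``$\le$'', which comes for free from the above slack whenever $np > 0$ (and the statement is vacuous otherwise).
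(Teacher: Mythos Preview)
Your proposal is correct and matches the paper's approach exactly: the paper does not give a proof at all but simply states the lemma as a ``corollary of Chernoff's bound (see~\cite{mat}, Theorem~7.2.1)'', and you have spelled out precisely that derivation. One tiny quibble: when $np=0$ the first inequality is not vacuous but actually fails (both sides equal $1$); this defect is already present in the paper's statement and is irrelevant in every application, so it is not a gap in your argument.
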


\section{Fragility and fractional fragility}\label{sec-fragility}

A \emph{tree decomposition} $(T,\beta)$ of a graph $G$ is a tree $T$ and a function $\beta:V(T)\to 2^{V(G)}$ assigning a \emph{bag} $\beta(u)\subseteq V(G)$ to
each vertex $u\in V(T)$, such that
\begin{itemize}
\item for every $v\in V(G)$, there exists $u\in V(T)$ with $v\in\beta(u)$,
\item for every $vw\in E(G)$, there exists $u\in V(T)$ with $\{v,w\}\subseteq\beta(u)$, and
\item for every $v\in V(G)$, the set $\{u:v\in\beta(u)\}$ induces a connected subtree of $T$.
\end{itemize}
The \emph{width} of the decomposition is
the maximum of the sizes of its bags minus one, and the treewidth of $G$ is the minimum of the widths of its tree decompositions.

Consider a connected planar graph $G$.  For each vertex $v\in V(G)$ and an integer $r>0$, the subgraph of $G$ induced by the vertices at distance
at most $r$ from $v$ has treewidth at most $3r+1$, as shown by Robertson and Seymour~\cite{rs3}.
For integers $k\ge 1$ and $0\le t\le k-1$, let $Z_{t,k}$ denote the set of all vertices whose distance from $v$ is congruent to $t$ modulo $k$.
As a corollary of the preceding observation, the graph $G-Z_{t,k}$ has treewidth at most $3k+1$, see~\cite{eppstein00} for more details.  This observation
is very useful in the design of approximation algorithms, as for any set $X\subseteq V(G)$ (e.g., an optimal solution to an optimization problem), there exists
$t$ such that $|Z_{t,k}\cap X|\le |X|/k$.  Thus, it may be possible to find an optimal solution to a problem in $G-Z_{k,t}$ using its bounded treewidth,
then extend it to a near-optimal solution in $G$.  See Baker~\cite{baker1994approximation} for several algorithms along these lines.

Of course, instead of bounded treewidth, we could require any other property useful for the design of algorithms.  This motivates the following definitions.
A \emph{class property} $\PP$ is a class of graph classes.
For instance,
\begin{itemize}
\item let $\Tw$ denote the class property consisting of all graph classes with bounded treewidth;
\item let $\Be$ denote the class property consisting of all classes with bounded expansion; and,
\item let $\Vs$ denote the class property consisting of all graph classes with bounded component size,
where a class $\CC$ has \emph{bounded component size} if there exists some $t\ge 0$ such that every connected component
of a graph in $\CC$ has at most $t$ vertices.
\end{itemize}
Note that $\Vs\subset\Tw\subset\Be$.

Let $G$ be a graph and $\CC$ a class of graphs.  A \emph{packing} in $G$ is a multiset of pairwise vertex-disjoint 
subsets of $G$ (and thus only the empty set can appear multiple times in the packing).
A packing $P$ in $G$ is \emph{$\CC$-complementary} if for every $X\in P$, the graph $G-X$ belongs to $\CC$.
A class of graphs $\GG$ is \emph{$\PP$-fragile} if for every $k\ge 1$, there exists a class $\CC\in\PP$ such that
every graph in $\GG$ has a $\CC$-complementary packing of size $k$ (let us remark that the choice of $\CC$ is not necessarily unique).
For a given integer $k\ge 1$, we say that a class $\CC$ with this property is a \emph{$(1/k)$-witness of the $\PP$-fragility of $\GG$}.  We use $1/k$ rather than $k$ for consistency
with a notation we will introduce in a few paragraphs.  

As we already outlined, the most studied version of fragility deals with treewidth, and
the example we started with can be stated as the claim that the class of all planar graphs is $\Tw$-fragile.
One of the most general results in the area is by DeVos et al.~\cite{devospart}, showing that every proper minor-closed class of graphs is $\Tw$-fragile.
Let us also remark similar concepts based on edge removal~\cite{devospart} or edge contraction~\cite{contrpart}.
Despite these encouraging results, it turns out that even very simple and well-structured classes of graphs need not be
$\Tw$-fragile.

Let $R_n$ be the strong product of three paths with $n$ vertices, that is, the graph with vertex set
$\{(i,j,k):1\le i,j,k\le n\}$ such that two distinct vertices $(i_1,j_1,k_1)$ and $(i_2,j_2,k_2)$ are adjacent iff
$|i_1-i_2|\le 1$, $|j_1-j_2|\le 1$ and $|k_1-k_2|\le 1$; the graph $R_4$ is depicted in Figure~\ref{fig-grid} (the thickness
of edges is just to aid the visualization).  Let $\RR=\{R_n:n\ge 1\}$.

\begin{figure}
\begin{center}
\includegraphics[width=60mm]{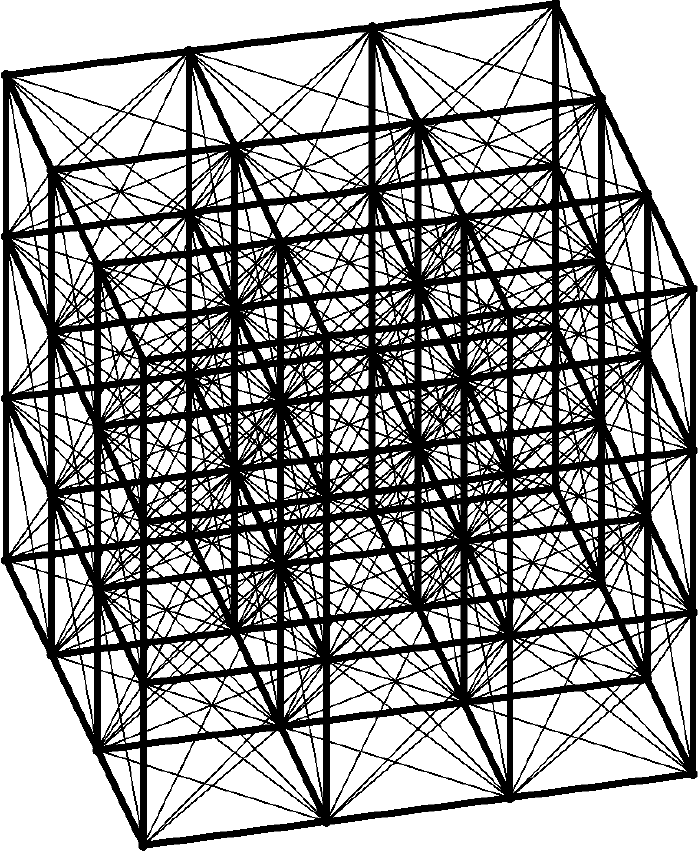}
\end{center}
\caption{The graph $R_4$.}\label{fig-grid}
\end{figure}

Berger et al.~\cite{gridtw} proved that for every $k\ge 0$, there exists $n\ge 1$ such that
for every partition $A$, $B$ of the vertices of $R_n$, either $R_n[A]$ or $R_n[B]$ has
treewidth at least $k$. Hence, we get the following.

\begin{theorem}[Berger et al.~\cite{gridtw}]\label{thm-gridtw}
The class $\RR$ is not $\Tw$-fragile.
\end{theorem}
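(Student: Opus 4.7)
The plan is to prove Theorem~\ref{thm-gridtw} by contradiction, reducing the non-$\Tw$-fragility of $\RR$ directly to the partition result of Berger et al.\ cited just above. The key observation is that a $\CC$-complementary packing of size~$2$ already produces a vertex partition of $R_n$ in which both sides induce subgraphs of bounded treewidth: if $X_1$ and $X_2$ are vertex-disjoint deletion sets with $R_n-X_1,R_n-X_2\in\CC$ for a class $\CC$ of bounded treewidth, then the partition $(X_2,\,V(R_n)\setminus X_2)$ has both parts of small treewidth, directly contradicting Berger et al.

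Concretely, I would assume for contradiction that $\RR$ is $\Tw$-fragile, instantiate the definition of fragility with $k:=2$, and obtain a class $\CC$ all of whose members have treewidth at most some constant $w$, which is a $(1/2)$-witness of the $\Tw$-fragility of $\RR$. Thus, for every $n\ge 1$, there exist vertex-disjoint subsets $X^n_1,X^n_2\subseteq V(R_n)$ with $\tw(R_n-X^n_1)\le w$ and $\tw(R_n-X^n_2)\le w$.

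Next, I would invoke the theorem of Berger et al.\ with their parameter set to $w+1$, yielding an integer $n$ such that for every partition $(A,B)$ of $V(R_n)$, either $\tw(R_n[A])\ge w+1$ or $\tw(R_n[B])\ge w+1$. For this $n$, I would take the partition $A:=X^n_2$, $B:=V(R_n)\setminus X^n_2$. Then $R_n[B]=R_n-X^n_2$, so $\tw(R_n[B])\le w$. Moreover, since $X^n_1\cap X^n_2=\emptyset$, we have $X^n_2\subseteq V(R_n)\setminus X^n_1$, so $R_n[X^n_2]$ is an induced subgraph of $R_n-X^n_1$; using monotonicity of treewidth under induced subgraphs, $\tw(R_n[A])\le\tw(R_n-X^n_1)\le w$. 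Both parts have treewidth at most $w<w+1$, contradicting the choice of $n$.

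The only real obstacle is conceptual: one must see that the fragility parameter $k=2$ already suffices. One deletion set, $X^n_2$, plays the role of one side $A$ of the partition, while the \emph{other} deletion set $X^n_1$ certifies, via induced-subgraph monotonicity of treewidth, that $R_n[A]$ itself has treewidth at most $w$. No probabilistic tools, no geometric analysis of $R_n$, and no quantitative estimate beyond the cited theorem of Berger et al.\ are required.
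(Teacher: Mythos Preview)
Your proposal is correct and follows precisely the route the paper indicates: the paper simply states Berger et al.'s partition result and says ``Hence, we get the following,'' leaving the derivation implicit. Your argument spells out exactly this implication, using fragility with $k=2$ to produce two disjoint deletion sets and then exhibiting the partition $(X_2,\,V(R_n)\setminus X_2)$ with both sides of treewidth at most $w$; this is the natural (and essentially unique) way to fill in the paper's one-word proof.
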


This is problematic in our intended application, since the class $\RR$ has strongly sublinear separators.
To overcome this issue, we introduce a fractional relaxation of fragility.
Given a graph $G$ and a class $\CC$, let $G-\CC=\{X\subseteq V(G): G-X\in \CC\}$.
A \emph{fractional $\CC$-complementary packing} is an assignment $\pi:G-\CC\to [0,1]$
such that $\sum_{X\in G-\CC} \pi(X)=1$.
The \emph{thickness} of the fractional packing is
$$\max\left\{\sum_{X\in G-\CC, v\in X} \pi(X): v\in V(G)\right\}.$$
Let us remark that if $G\in\CC$, then $G$ has a fractional $\CC$-complementary packing of thickness $0$ obtained by setting $\pi(\emptyset)=1$
and $\pi(X)=0$ for every nonempty $X\subseteq V(G)$.
A convenient way how to view a fractional $\CC$-complementary packing of thickness $\varepsilon$ is as a probability distribution on $G-\CC$
such that for every vertex $v$, the probability that $v$ belongs to a set chosen at random according to this distribution is at most $\varepsilon$.

A class of graphs $\GG$ is \emph{fractionally $\PP$-fragile} if for every $\varepsilon>0$, there exists a class $\CC\in\PP$
such that each graph in $\GG$ has a fractional $\CC$-complementary packing of thickness at most $\varepsilon$.
For a given $\varepsilon>0$, we say that such a class $\CC$ is an \emph{$\varepsilon$-witness of the fractional $\PP$-fragility of $\GG$}.

Clearly, if a class is $\PP$-fragile, it is also fractionally $\PP$-fragile.
On the other hand, the following example together with Theorem~\ref{thm-gridtw} shows that fractional $\Tw$-fragility (or even fractional $\Vs$-fragility) does not imply $\Tw$-fragility.

\begin{lemma}\label{lemma-gridfract}
The class $\RR$ is fractionally $\Vs$-fragile.
\end{lemma}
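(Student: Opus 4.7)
The plan is to exhibit, for every $\varepsilon>0$, an explicit witness class $\CC\in\Vs$ and an explicit probability distribution on $R_n-\CC$ whose single-vertex marginal is bounded by $\varepsilon$ for every vertex. The distribution should come from a ``randomly shifted periodic grid of slabs'' construction: in each of the three coordinate directions, pick a random residue and delete all planes of vertices hitting that residue modulo some period $L$. This is the standard Baker-style shifting idea, but applied simultaneously in all three directions and viewed fractionally rather than existentially.

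Concretely, given $\varepsilon>0$, I would set $L=\lceil 3/\varepsilon\rceil$ and let $\CC$ be the class of graphs every component of which has at most $(L-1)^3$ vertices (so $\CC\in\Vs$). For each $R_n$, draw $a,b,c$ independently and uniformly from $\{0,1,\ldots,L-1\}$ and define
$$X_{a,b,c}=\bigl\{(i,j,k)\in V(R_n):i\equiv a\pmod L\text{ or }j\equiv b\pmod L\text{ or }k\equiv c\pmod L\bigr\}.$$
For any fixed $(a,b,c)$, the removal of $X_{a,b,c}$ partitions each coordinate axis into maximal intervals of length at most $L-1$; since in the strong product two vertices can be adjacent only if they agree up to $1$ in every coordinate and slabs of value $\equiv a,b,c$ are gone, every component of $R_n-X_{a,b,c}$ is contained in one box of side at most $L-1$, hence has at most $(L-1)^3$ vertices. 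Thus $R_n-X_{a,b,c}\in\CC$.

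To convert this to a fractional packing, set $\pi(X)=\text{Prob}[X_{a,b,c}=X]$ for each $X\in R_n-\CC$; then $\sum_X\pi(X)=1$ as required. For a fixed vertex $v=(i,j,k)$, a union bound gives
$$\text{Prob}[v\in X_{a,b,c}]\le\text{Prob}[i\equiv a]+\text{Prob}[j\equiv b]+\text{Prob}[k\equiv c]=\tfrac{3}{L}\le\varepsilon,$$
so the thickness of $\pi$ is at most $\varepsilon$. Since $\varepsilon>0$ was arbitrary, $\RR$ is fractionally $\Vs$-fragile.

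There is no real obstacle here: the argument is essentially one line of product structure plus a union bound. The only subtlety worth double-checking is that adjacency in the strong product really does force components of $R_n-X_{a,b,c}$ into a single side-$(L-1)$ box in all three coordinates; this is immediate from the $\ell_\infty$-distance characterization of edges in $R_n$, but it is the place where the strong (rather than Cartesian) product matters.
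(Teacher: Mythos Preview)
Your proof is correct and follows essentially the same approach as the paper: delete periodic slabs modulo $L=\lceil 3/\varepsilon\rceil$ in all three coordinate directions to shatter $R_n$ into boxes of size at most $(L-1)^3$, then bound the per-vertex hitting probability by $3/L\le\varepsilon$. The only cosmetic difference is that you draw three independent shifts $(a,b,c)$ while the paper uses a single common shift $t$ for all three coordinates; both give the same witness class and the same thickness bound.
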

\begin{proof}
Consider any $\varepsilon>0$, and let $u=\lceil 3/\varepsilon\rceil$.  Let $\CC\in \Vs$ be the class of graphs in that every component has at most $(u-1)^3$ vertices.

Consider a graph $R_n\in\RR$.
For $0\le t\le u-1$, let $X_{n,t,u}$ denote the set of triples $(i,j,k)$ such that $1\le i,j,k\le n$ and
at least one of $i$, $j$ and $k$ is congruent to $t$ modulo $u$.  Then each component of $R_n\setminus X_{n,t,u}$
has at most $(u-1)^3$ vertices, and thus $X_{n,t,u}$ belongs to $R_n-\CC$.  If $n\le u-1$, then set $\pi(\emptyset)=1$
and $\pi(X)=0$ for every non-empty $X\in R_n-\CC$.  If $n\ge u$, then set $\pi(X_{n,t,u})=1/u$ for $0\le t\le u-1$
and $\pi(X)=0$ for every other $X\in R_n-\CC$.

Note that every vertex $v$ of $R_n$ belongs to at most three of the sets $X_{n,t,u}$, and thus
the probability that $v$ belongs to a set chosen according to the described distribution is at most $3/u\le\varepsilon$.
Thus, $\pi$ is a fractional $\CC$-complementary packing in $R_n$ of thickness at most $\varepsilon$.

Since such a fractional packing exists for every $\varepsilon>0$ and $R_n\in\RR$, it follows that
$\RR$ is fractionally $\Vs$-fragile.
\end{proof}

Let us remark that it is not a coincidence that $\RR$ is not only fractionally $\Tw$-fragile but also fractionally $\Vs$-fragile,
as we will see in Corollary~\ref{cor-twvs}.

\section{Properties and applications of fractional fragility}\label{sec-fragilityap}

As we have already mentioned in the introduction, we need the notion of fractional $\Vs$-fragility when showing the subexponential
expansion property of graph classes with strongly sublinear separators.  Nevertheless, the notion of fractional $\PP$-fragility
appears to be of independent interest.  We can consider it to be a measure of the distance of the graph class from some property.
Also, many of the algorithmic applications of $\Tw$-fragility also work for the fractional relaxation, which extends them to more
graph classes.

This section is devoted to establishing the basic properties of fractional fragility and showcasing some of its applications.
While this may help the reader to obtain a better understanding of the notion, we do not use these results in the rest of the paper, and
thus the reader may skip to the next section if they prefer to.

Let us first give two examples of algorithmic applications of fractional $\Tw$-fragility (both of which are straightforward generalizations of previously known
results for $\Tw$-fragility).  Of course, in this context we need to be able to find the fractional packings that certify
the fractional $\Tw$-fragility efficiently.
For $c\ge 1$, we say that a class $\GG$ of graphs is \emph{$O(n^c)$-effectively fractionally $\Tw$-fragile} if for
every integer $k\ge 1$, there exists a constant $p_k$, a $(1/k)$-witness $\CC_k$ of the fractional $\Tw$-fragility of $\GG$,
and an algorithm with
\begin{description}
\item[input:] a graph $G\in \GG$, and
\item[output:] a fractional $\CC_k$-complementary packing of thickness at most $1/k$ in $G$,
which assigns a non-zero value to at most $p_k$ elements of $G-\CC_k$,
\end{description}
and the time complexity of the algorithm is $O(|V(G)|^c)$.

The \emph{independence number} $\alpha(G)$ is the size of the largest independent set of a graph $G$.
Determining the independence number of a graph is an NP-complete problem~\cite{garey1979computers}, and even
approximating it up to a polynomial factor is not possible in polynomial time unless $\text{P}=\text{NP}$~\cite{apxindep}.  Nevertheless, a polynomial-time approximation scheme for the
independent set exists for graphs from any $O(n^c)$-effectively fractionally $\Tw$-fragile class of graphs.

\begin{lemma}\label{lemma-indep}
Let $c\ge 1$ and let $\GG$ be an $O(n^c)$-effectively fractionally $\Tw$-fragile class of graphs.
For every $\varepsilon>0$, there exists an algorithm with time complexity $O(|V(G)|^c)$ that for a graph $G\in \GG$ returns an independent set of $G$ of size at
least $(1-\varepsilon)\alpha(G)$.
\end{lemma}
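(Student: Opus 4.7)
The plan is a standard Baker-style PTAS template, with fractional $\Tw$-fragility playing the role of the usual layered decomposition. Given $\varepsilon>0$, set $k=\lceil 1/\varepsilon\rceil$ and let $\CC_k\in\Tw$, together with the constant $p_k$ and the $O(n^c)$-time algorithm, witness that $\GG$ is $O(n^c)$-effectively fractionally $\Tw$-fragile. Since $\CC_k\in\Tw$, fix a constant $w_k$ such that every graph in $\CC_k$ has treewidth at most $w_k$.

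On input $G\in\GG$, the algorithm first runs the fractional fragility routine to produce a fractional $\CC_k$-complementary packing $\pi$ of thickness at most $1/k$ in $G$, supported on a multiset $\mathcal{S}\subseteq G-\CC_k$ of size at most $p_k$. For each $X\in\mathcal{S}$, the graph $G-X$ belongs to $\CC_k$ and hence has treewidth at most $w_k$; one computes a tree decomposition of $G-X$ of width $O(w_k)$ in linear time by a constant-factor treewidth approximation, and then solves maximum independent set on $G-X$ by the standard bottom-up dynamic program on that decomposition in time $O(|V(G)|)$. The output is the largest independent set found over all $X\in\mathcal{S}$.

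For correctness, let $I^*$ be a maximum independent set of $G$. Reading $\pi$ as a probability distribution on $\mathcal{S}$, the thickness bound yields
$$E(|I^*\cap X|)=\sum_{v\in I^*}\text{Prob}[v\in X]\le |I^*|/k,$$
so by Lemma~\ref{lemma-markov} some $X_0\in\mathcal{S}$ satisfies $|I^*\cap X_0|\le |I^*|/k\le \varepsilon\,\alpha(G)$. Then $I^*\setminus X_0$ is an independent set of $G-X_0$ of size at least $(1-\varepsilon)\alpha(G)$, so the algorithm's output has size at least $(1-\varepsilon)\alpha(G)$. The total running time is $O(|V(G)|^c)$ for computing $\pi$ plus $p_k$ linear-time calls for the bounded-treewidth subproblems, which is $O(|V(G)|^c)$ since both $p_k$ and $w_k$ depend only on $\varepsilon$.

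The main step is essentially to marry the hypothesis with standard bounded-treewidth dynamic programming; I do not expect a genuine obstacle, because both the linear-time constant-factor treewidth approximation and the DP for independent set on a tree decomposition of bounded width are classical. The only conceptual point is to notice that one does not need a single global low-treewidth subgraph containing most of $I^*$: it suffices that some $X$ in the (constant-size) support of $\pi$ deletes only a small fraction of $I^*$, which is exactly what thickness at most $1/k$ guarantees.
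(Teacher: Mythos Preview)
Your proof is correct and follows essentially the same approach as the paper's: choose $k=\lceil 1/\varepsilon\rceil$, obtain the packing $\pi$, solve maximum independent set exactly on each $G-X$ in the (constant-size) support using bounded treewidth, and output the best; correctness comes from the expectation bound showing some $X$ misses all but an $\varepsilon$-fraction of an optimum. The only cosmetic difference is that the paper invokes Courcelle's theorem for the bounded-treewidth subroutine, whereas you spell out treewidth approximation plus dynamic programming.
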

\begin{proof}
Let $k=\lceil 1/\varepsilon\rceil$.
The algorithm first finds a fractional $\CC_k$-complementary packing $\pi$ in $G$ of thickness at most $1/k\le \varepsilon$,
using the algorithm from the definition of $O(n^c)$-effective fractional $\Tw$-fragility.
Let $X_1$, \ldots, $X_p$ be the elements of $G-\CC_k$ to which $\pi$ assigns a non-zero probability, where $p\le p_k$. 
For $1\le i\le p$, let $A_i$ be a largest independent set in $G-X_i$, which can be found in linear time since $\CC_k$ has bounded
treewidth~\cite{courcelle}.  The algorithm returns the largest of $A_1$, \ldots, $A_p$.

Let $A$ be a largest independent set in $G$.
For a set $X\in G-\CC_k$ chosen at random according to the probability distribution $\pi$, each vertex belongs to $X$ with
probability at most $1/k\le \varepsilon$, and thus the expected size of $X\cap A$ is at most
$\varepsilon|A|$.  Hence, there exists $X\in G-\CC$ with $\pi(X)>0$ such that $|X\cap A|\le \varepsilon|A|$.
Since $A\setminus X$ is an independent set in $G-X$, we conclude that
$\alpha(G-X)\ge |A\setminus X|\ge (1-\varepsilon)|A|$.  Therefore, the algorithm indeed returns an independent set of size at least
$(1-\varepsilon)\alpha(G)$.
\end{proof}

Another problem that we consider is testing the existence of a subgraph.  Testing whether a clique $K_n$ is
a subgraph of $G$ is equivalent to verifying that the complement of $G$ has independence number at least $n$,
and thus if the tested subgraph is a part of the input, then the problem is NP-complete.
To test whether a fixed graph $H$ is a subgraph of $G$, we can test all $O\bigl(|V(G)|^{|V(H)|}\bigr)$ choices
for the possible placement of the vertices of $H$ in $G$, or we can use a more involved algorithm
of Ne\v{s}etril and Poljak~\cite{nepol}.  In both cases, we obtain a polynomial-time algorithm
whose exponent depends on $H$, and this cannot be avoided in general unless $\text{FPT}=W[1]$, see~\cite{fellows}.
However, if $G$ is furthermore restricted to belong to an $O(n^c)$-effectively fractionally $\Tw$-fragile class of graphs,
we can design a polynomial-time algorithm whose exponent is independent of $H$.

\begin{lemma}\label{lemma-sg}
Let $c\ge 1$ and let $H$ be a fixed graph.  If a class $\GG$ is $O(n^c)$-effectively fractionally $\Tw$-fragile, then there
exists an algorithm determining whether $H\subseteq G$ for graphs $G\in\GG$ with time complexity $O(|V(G)|^c)$.
\end{lemma}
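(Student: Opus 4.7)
The plan is to mimic the approach of Lemma~\ref{lemma-indep}, replacing ``large independent set'' by ``copy of $H$ disjoint from the removed set''. Set $k=|V(H)|+1$ and invoke the $O(n^c)$-effective fractional $\Tw$-fragility of $\GG$ to compute, in time $O(|V(G)|^c)$, a fractional $\CC_k$-complementary packing $\pi$ of thickness at most $1/k$ supported on at most $p_k$ sets $X_1,\dots,X_p\in G-\CC_k$, where $p\le p_k$ is a constant depending only on $H$.

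For each $i$, the graph $G-X_i$ has treewidth bounded by a constant (depending on $H$) since $\CC_k\in\Tw$. Testing whether a fixed graph is a subgraph of a graph of bounded treewidth can be done in linear time, either by direct dynamic programming on a tree decomposition, or by expressing ``$H\subseteq\cdot$'' as a fixed MSO formula and applying Courcelle's theorem~\cite{courcelle}. The algorithm answers \emph{yes} iff $H\subseteq G-X_i$ for some $i\in\{1,\dots,p\}$.

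For correctness, one direction is immediate: if $H\subseteq G-X_i$ then $H\subseteq G$. For the converse, suppose there is an embedding $\varphi:V(H)\to V(G)$ of $H$ into $G$. Choosing $X$ at random according to $\pi$, each vertex of $G$ lies in $X$ with probability at most $1/k$, so by linearity of expectation the expected number of vertices of $\varphi(V(H))$ contained in $X$ is at most $|V(H)|/k<1$. Hence there is some $i$ with $\pi(X_i)>0$ and $\varphi(V(H))\cap X_i=\emptyset$, which means $\varphi$ witnesses $H\subseteq G-X_i$. The total running time is $O(|V(G)|^c)$ for the packing plus a constant number of linear-time bounded-treewidth subgraph tests, yielding $O(|V(G)|^c)$ overall.

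There is no real obstacle here; the only substantive point is to pick $k$ strictly larger than $|V(H)|$ so that the expected overlap between a random $X_i$ and any fixed vertex set of size $|V(H)|$ is strictly less than $1$, forcing the existence of an $X_i$ that avoids a hypothetical copy of $H$. Everything else is a direct combination of the effective packing and standard bounded-treewidth subgraph testing.
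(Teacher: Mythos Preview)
Your proof is correct and follows essentially the same approach as the paper: you pick $k=|V(H)|+1$, compute the packing supported on at most $p_k$ sets, test $H\subseteq G-X_i$ in linear time for each, and argue via linearity of expectation that some $X_i$ with $\pi(X_i)>0$ avoids the image of $H$. This matches the paper's proof step for step.
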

\begin{proof}
Let $k=|V(H)|+1$.  The algorithm finds a $\CC_k$-complementary packing $\pi$ in $G$ of thickness at most $1/k$,
using the algorithm from the definition of $O(n^c)$-effective fractional $\Tw$-fragility.
Let $X_1$, \ldots, $X_p$ be the elements of $G-\CC_k$ to which $\pi$ assigns a non-zero probability, where $p\le p_k$.      
For $1\le i\le p$, determine whether $H\subseteq G-X_i$ in linear time, since $\CC$ has bounded
treewidth.  If $H$ is a subgraph of one of $G-X_1$, \ldots, $G-X_n$, then $H$ is also a subgraph of $G$.
Otherwise, the algorithm returns that $H$ is not a subgraph of $G$.

Clearly, if $H$ is not a subgraph $G$, then the algorithm correctly determines this.
Suppose that $H$ is a subgraph of $G$, and let $S\subseteq V(G)$ be the set of vertices of this subgraph.
For a set $X\in G-\CC_k$ chosen at random according to the probability distribution $\pi$,
the expected size of $X\cap S$ is at most $|S|/k<1$.  Hence, there exists $X\in G-\CC_k$ with $\pi(X)>0$ such that $X\cap S=\emptyset$, and thus $H\subseteq G-X$.
It follows that the algorithm correctly determines whether $H\subseteq G$.
\end{proof}

In these algorithms, bounded treewidth could be replaced by any other class property which ensures efficient solvability
of the considered problem.  Furthermore, the notion of efficiency could be relaxed,
and we could for instance only require to be able to sample from the probability distribution efficiently (which would
turn the algorithms to probabilistic ones).

In the rest of the text, we do not consider the algorithmic constraint
of being able to find the packings efficiently, and only discuss the graph-theoretic questions concerning fragility and fractional fragility
(although, let us remark that the argument proving Lemma~\ref{lemma-split} can be implemented in an $O(n^c)$-effective way for every $c>2$).
The following is obvious.

\begin{observation}\label{obs-basic}
Let $\GG_1$ and $\GG_2$ be graph classes, and let $\PP_1$ and $\PP_2$ be class properties.
\begin{itemize}
\item If $\GG_1$ is $\PP_1$-fragile, it is also fractionally $\PP_1$-fragile.
\item If $\GG_1$ is (fractionally) $\PP_1$-fragile and $\GG_2\subseteq \GG_1$, then $\GG_2$ is (fractionally) $\PP_1$-fragile.
\item If $\PP_1\subseteq\PP_2$, and $\GG_1$ is (fractionally) $\PP_1$-fragile, then $\GG_1$ is (fractionally) $\PP_2$-fragile.
\end{itemize}
\end{observation}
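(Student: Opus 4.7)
The three items are all essentially unpackings of the definitions, with only the first requiring any actual construction.

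For the first item, the plan is to turn an integral packing into a uniform fractional one. Given $\varepsilon>0$, I would pick an integer $k\ge 1$ with $1/k\le\varepsilon$, let $\CC\in\PP_1$ be the class witnessing $\PP_1$-fragility at parameter $k$, and take any $G\in\GG_1$. By hypothesis $G$ admits a $\CC$-complementary packing of size $k$, i.e.\ a multiset $\{X_1,\ldots,X_k\}$ of pairwise vertex-disjoint subsets (only $\emptyset$ may be repeated) with $G-X_i\in\CC$ for every $i$. Define $\pi:G-\CC\to[0,1]$ by $\pi(X)=|\{i:X_i=X\}|/k$ (and $0$ elsewhere); then $\sum_X\pi(X)=1$. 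Since the $X_i$ are pairwise vertex-disjoint, each vertex $v\in V(G)$ lies in at most one of them, so $\sum_{X\ni v}\pi(X)\le 1/k\le\varepsilon$. Hence $\pi$ is a fractional $\CC$-complementary packing of thickness at most $\varepsilon$, establishing fractional $\PP_1$-fragility with the same witness $\CC$.

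For the second item, I would observe that whatever witness $\CC\in\PP_1$ and (integral or fractional) $\CC$-complementary packing is provided for each $G\in\GG_1$ is, in particular, provided for each $G\in\GG_2$, because $\GG_2\subseteq\GG_1$. No modification of the packings is needed. For the third item, if $\CC\in\PP_1$ is a $(1/k)$-witness (resp.\ $\varepsilon$-witness) of (fractional) $\PP_1$-fragility of $\GG_1$, then by $\PP_1\subseteq\PP_2$ the same $\CC$ lies in $\PP_2$, so it serves as a witness of (fractional) $\PP_2$-fragility of $\GG_1$ with exactly the same packings.

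There is no genuine obstacle here; the only mildly delicate point is the first item, where one must be careful that ``packing of size $k$'' is a multiset with $k$ elements (pairwise disjoint as nonempty sets), so that uniform weights $1/k$ sum to $1$ and the disjointness yields per-vertex weight at most $1/k$. Everything else is immediate from the definitions of $\PP$-fragile, fractionally $\PP$-fragile, and class property.
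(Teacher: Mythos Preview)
Your proposal is correct. The paper does not give a proof of this observation at all---it simply introduces it with ``The following is obvious'' and moves on---so your explicit unpacking of the definitions is exactly the intended argument, spelled out in full.
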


Fractional fragility is transitive in the following sense (so, for example, if a class $\GG$ is fractionally $\PP$-fragile for a class property $\PP$
whose elements contain only planar graphs, then $\GG$ is also fractionally $\Tw$-fragile).

\begin{lemma}\label{lemma-trans}
Let $\PP_1$ and $\PP_2$ be class properties such that every class in $\PP_1$ is fractionally $\PP_2$-fragile.
If a class $\GG$ is fractionally $\PP_1$-fragile, then it also is fractionally $\PP_2$-fragile.
\end{lemma}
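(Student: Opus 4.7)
The plan is to build a witness of fractional $\PP_2$-fragility of $\GG$ by composing two layers of fragility witnesses. Fix $\varepsilon>0$. Use the fractional $\PP_1$-fragility of $\GG$ to choose an $(\varepsilon/2)$-witness $\CC_1\in\PP_1$, and then use the hypothesis that $\CC_1$ is itself fractionally $\PP_2$-fragile to choose an $(\varepsilon/2)$-witness $\CC_2\in\PP_2$ of $\CC_1$. The claim I want to verify is that $\CC_2$ serves as an $\varepsilon$-witness of the fractional $\PP_2$-fragility of $\GG$; since $\varepsilon$ is arbitrary, this gives the lemma.

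To verify the claim, fix $G\in\GG$. The first witness yields a fractional $\CC_1$-complementary packing $\pi_1$ in $G$ of thickness at most $\varepsilon/2$, which I view as a probability distribution on sets $X_1\subseteq V(G)$ with $G-X_1\in\CC_1$. For each such $X_1$, the second witness applied to $G-X_1\in\CC_1$ gives a fractional $\CC_2$-complementary packing $\pi_2^{X_1}$ in $G-X_1$ of thickness at most $\varepsilon/2$. I would couple these by first drawing $X_1$ according to $\pi_1$ and then, conditionally, drawing $X_2\subseteq V(G-X_1)$ according to $\pi_2^{X_1}$; set $Y=X_1\cup X_2$. Since $G-Y=(G-X_1)-X_2\in\CC_2$, the induced distribution on $Y$ defines a fractional $\CC_2$-complementary packing $\pi$ in $G$ by $\pi(Y)=\sum_{X_1\cup X_2=Y}\pi_1(X_1)\,\pi_2^{X_1}(X_2)$, with the sum ranging over the (finitely many) decompositions.

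The thickness estimate is a one-line union bound. For any $v\in V(G)$, the probability that $v\in Y$ is at most the probability that $v\in X_1$ plus the probability that $v\in X_2$. The first quantity is at most $\varepsilon/2$ by the thickness of $\pi_1$. For the second, note that $v\in X_2$ is only possible when $v\notin X_1$, and conditional on any such $X_1$ the thickness of $\pi_2^{X_1}$ gives conditional probability at most $\varepsilon/2$; averaging over $X_1$ preserves the bound. Hence each vertex lies in the random set $Y$ with probability at most $\varepsilon$, so $\pi$ has thickness at most $\varepsilon$.

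There is no real obstacle here: the argument is a clean two-step composition, and the only thing worth sanity-checking is that the composed distribution really is a valid fractional $\CC_2$-complementary packing, which is immediate from the construction since it is a probability distribution supported on $G-\CC_2$.
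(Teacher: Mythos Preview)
Your proof is correct and follows essentially the same approach as the paper: choose an $(\varepsilon/2)$-witness $\CC_1\in\PP_1$ for $\GG$, then an $(\varepsilon/2)$-witness $\CC_2\in\PP_2$ for $\CC_1$, and compose the two fractional packings by drawing $X_1$ then $X_2$ and taking the union, bounding the thickness by a union bound. The paper's argument is identical in structure and in the choice of parameters.
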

\begin{proof}
Consider any $\varepsilon>0$.  Let $\CC_1$ be a $(\varepsilon/2)$-witness of the fractional $\PP_1$-fragility of $\GG$.
Since $\CC_1\in\PP_1$, the class $\CC_1$ is fractionally $\PP_2$-fragile.
Let $\CC_2$ be an $(\varepsilon/2)$-witness of the fractional $\PP_2$-fragility of $\CC_1$.

Consider a graph $G\in \GG$ and let $\pi_1$ be its fractional $\CC_1$-complementary packing of thickness at most $\varepsilon/2$.
For every $Z\in G-\CC_1$, let $\pi_Z$ be a fractional $\CC_2$-complementary packing of thickness at most $\varepsilon/2$ of $G-Z$.
Let $X\in G-\CC_2$ be chosen at random as follows:  First, select $X_1\in G-\CC_1$ at random according to the distribution
$\pi_1$. Then, select $X_2\in (G-X_1)-\CC_2$ at random according to the distribution $\pi_{X_1}$.  Let $X=X_1\cup X_2$.
This procedure for choosing $X\in G-\CC_2$ defines a probability distribution $\pi$ on $G-\CC_2$.
The probability that a vertex $v\in V(G)$ belongs to $X$ chosen at random according to the distribution $\pi$ is equal to the probability that
either $v\in X_1$, or $v\not\in X_1$ and $v\in X_2$.  Each of these probabilities is at most $\varepsilon/2$, and thus the probability
that $v$ belongs to $X$ is at most $\varepsilon$.
Therefore, $\pi$ is a fractional $\CC_2$-complementary packing in $G$ of thickness at most $\varepsilon$.

Since such a fractional $\CC_2$-complementary packing exists for every $G\in\GG$, it follows that $\CC_2$ is
an $\varepsilon$-witness of fractional $\PP_2$-fragility of $\GG$.
As the choice of $\varepsilon>0$ was arbitrary,
we conclude that $\GG$ is fractionally $\PP_2$-fragile.
\end{proof}

\section{Fragility and bounded expansion}\label{sec-frabe}

Let us now derive the connection to bounded expansion, which we use in the proof of Theorem~\ref{thm-main}.
Let us recall that $\Be$ denotes the class property consisting of all classes with bounded expansion.
For a function $f:\mathbf{N}\to \mathbf{N}$, let $\Ex(f)$ denote the class of all graphs $G$ such that $\nabla_k(G)\le f(k)$ for
all integers $k\ge 0$.
Note that for every fractionally $\Be$-fragile class of graphs $\GG$, there exists a function
$g:\mathbf{R}^+\times \mathbf{N}\to \mathbf{N}$ such that for every $\varepsilon>0$, the class $\Ex(g(\varepsilon,\cdot))$ is an
$\varepsilon$-witness of the fractional $\Be$-fragility of $\GG$.
If $g$ satisfies this property, we say that $\GG$ is \emph{fractionally $(\Be,g)$-fragile}.

As the following lemma shows, a class of graphs is fractionally $\Be$-fragile if and only if it has bounded expansion
(and consequently, if and only if it is $\Be$-fragile), and thus the notion of the fractional $\Be$-fragility does not
bring anything qualitatively new.  Nevertheless, the quantitative relationship between the respective expansion functions
will be of importance later.

\begin{lemma}\label{lemma-bexp}
Let $g:\mathbf{R}^+\times \mathbf{N}\to \mathbf{N}$ be an arbitrary function, and let us define a function $f:\mathbf{N}\to \mathbf{N}$ by setting $f(k)=2g\left(\frac{1}{4k+4},k\right)$.
If $\GG$ is a fractionally $(\Be,g)$-fragile class of graphs, then the expansion of $\GG$ is bounded by $f$.
\end{lemma}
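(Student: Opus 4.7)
The plan is to fix an arbitrary $G\in\GG$ and an integer $k\ge 0$, and to show $\nabla_k(G)\le f(k)$. Take any $k$-minor $H$ of $G$, witnessed by pairwise vertex-disjoint connected subgraphs $\{G_v:v\in V(H)\}$ of radius at most $k$ about chosen centers $c_v$. The preprocessing step is: for each edge $e=uv\in E(H)$, fix a representative edge $e^*\in E(G)$ joining some $u^*\in V(G_u)$ to some $v^*\in V(G_v)$, together with paths $P_{e,u}\subseteq G_u$ from $c_u$ to $u^*$ and $P_{e,v}\subseteq G_v$ from $c_v$ to $v^*$, each of length at most $k$. The concatenation $P_e:=P_{e,u}\cup\{e^*\}\cup P_{e,v}$ is then a walk in $G$ from $c_u$ to $c_v$ visiting at most $2k+2$ vertices.

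Next, set $\varepsilon:=1/(4k+4)$ and invoke fractional $(\Be,g)$-fragility to obtain a fractional $\Ex(g(\varepsilon,\cdot))$-complementary packing $\pi$ in $G$ of thickness at most $\varepsilon$. Sampling $X$ according to $\pi$, each vertex lies in $X$ with probability at most $\varepsilon$, so by the union bound $\text{Prob}[V(P_e)\cap X\ne\emptyset]\le (2k+2)\varepsilon=1/2$ for every $e\in E(H)$. Calling $e$ \emph{alive} when $V(P_e)\cap X=\emptyset$, the expected number of alive edges is at least $|E(H)|/2$, so one may fix a specific $X$ realizing at least this many.

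For this $X$, define $\tilde{G}_v:=\bigcup\{P_{e,v}:e\in E(H),\ v\in e,\ e\text{ alive}\}$ for each $v\in V(H)$. Since all these half-paths share the endpoint $c_v$, each non-empty $\tilde{G}_v$ is a connected subgraph of $G_v$ of radius at most $k$ about $c_v$, and is disjoint from $X$. The $\tilde{G}_v$'s inherit pairwise disjointness from the $G_v$'s, and for every alive edge $e=uv$ the representative $e^*$ lies in $G-X$ and joins $\tilde{G}_u$ to $\tilde{G}_v$. Contracting the non-empty $\tilde{G}_v$'s in $G-X$ therefore yields a $k$-minor $H'$ of $G-X$ with $|V(H')|\le |V(H)|$ and $|E(H')|$ at least the number of alive edges. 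Combined with $G-X\in\Ex(g(\varepsilon,\cdot))$ this gives
$$\frac{|E(H)|}{|V(H)|}\le \frac{2|E(H')|}{|V(H')|}\le 2\nabla_k(G-X)\le 2g(\varepsilon,k)=f(k),$$
and taking the supremum over $H$ and $G$ completes the proof.

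The main obstacle to anticipate is that using the whole walks $P_e$ as branch sets would only produce a $(2k+1)$-minor rather than a $k$-minor; the essential trick is to split each walk at its endpoints and assign the half-paths $P_{e,u},P_{e,v}$ to the branch sets of their respective endpoints, which keeps each branch set inside the original $G_v$ and hence of radius at most $k$. This splitting is precisely what forces $(2k+2)$ vertices per walk and therefore the factor $4k+4$ in the choice of $\varepsilon$, as well as the factor $2$ in the definition of $f$.
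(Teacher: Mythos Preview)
Your proof is correct and follows essentially the same approach as the paper's: set $\varepsilon=1/(4k+4)$, associate to each edge of $H$ a path of at most $2k+2$ vertices between the two branch-set centers, use the fractional packing to find $X$ avoiding at least half of these paths, and observe that the surviving edges form a $k$-minor of $G-X$. Your write-up is in fact more careful than the paper's at the one non-trivial point, namely justifying that $H'$ is a $k$-minor of $G-X$; the paper simply asserts this, whereas you explicitly rebuild the branch sets $\tilde{G}_v$ from the surviving half-paths.
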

\begin{proof}
Fix $k\ge 0$.  Let $\varepsilon=\frac{1}{4k+4}$ and let $\CC=\Ex(g(\varepsilon,\cdot))$.

Consider an arbitrary graph $G\in\GG$ and let $H$ be a $k$-minor of $G$.  Let $V(H)=\{v_1,\ldots, v_h\}$.  The presence of
$H$ as a $k$-minor of $G$ is certified by vertex-disjoint rooted trees $T_1, \ldots, T_h\subseteq G$ such that
\begin{itemize}
\item for $1\le i\le h$, the tree $T_i$ has depth at most $k$, and
\item if $v_iv_j\in E(H)$, then there exists an edge $e_{ij}\in E(G)$ joining a vertex of $T_i$ with a vertex of $T_j$.
\end{itemize}
For an edge $v_iv_j\in E(H)$, let $P_{ij}$ be the path of length at most $2k+1$ consisting of $e_{ij}$ and
the paths from the ends of $e_{ij}$ to the roots of $T_i$ and $T_j$.

Since $\GG$ is fractionally $(\Be,g)$-fragile, there exists a fractional $\CC$-comple\-men\-tary packing $\pi$ in $G$ of thickness at most $\varepsilon$.
Let $X\in G-\CC$ be chosen at random
according to $\pi$.  Let $H'$ be the subgraph of $H$ consisting of edges $v_iv_j$ such that $P_{ij}$ is disjoint with $X$, and
of the vertices incident with these edges.
The probability that $P_{ij}$ intersects $X$ is at most $\varepsilon|V(P_{ij})|\le 1/2$, and thus the expected number of edges of $H'$ is at least
$|E(H)|/2$.  Let us fix a set $X\in G-\CC$ so that $|E(H')|\ge|E(H)|/2$.  Note that $H'$ is a $k$-minor of $G-X$, and thus
$\frac{|E(H')|}{|V(H')|}\le g(\varepsilon,k)$.  However,
$\frac{|E(H')|}{|V(H')|}\ge \frac{|E(H)|/2}{|V(H')|}\ge \frac{1}{2}\cdot \frac{|E(H)|}{|V(H)|}$.

It follows that $\frac{|E(H)|}{|V(H)|}\le 2g(\varepsilon,k)=f(k)$ for every $k$-minor $H$ of $G$,
and thus $\nabla_k(G)\le f(k)$.  Since this holds for every $G\in \GG$ and every $k\ge 0$, the expansion of $\GG$ is bounded by $f$.
\end{proof}

\section{Fragility and sublinear separators}\label{sec-split}

Next, we study the connection between fractional fragility and sublinear separators.

\begin{lemma}\label{lemma-sublin}
Let $\PP$ be a class property such that every class in $\PP$ has sublinear separators.
If $\GG$ is a fractionally $\PP$-fragile class of graphs, then $\GG$ has sublinear separators.
\end{lemma}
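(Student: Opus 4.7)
The plan is to show that for every $\eta>0$, we have $s_\GG(n)\le \eta n$ for all sufficiently large $n$, which gives $\lim_{n\to\infty} s_\GG(n)/n=0$. Fix such an $\eta$, set $\varepsilon=\eta/2$, and let $\CC\in\PP$ be an $\varepsilon$-witness of the fractional $\PP$-fragility of $\GG$. Since $\CC$ has sublinear separators, there exists $n_0$ such that $s_\CC(m)\le \varepsilon m$ for every $m\ge n_0$.

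Consider any $G\in\GG$ with $n\ge n_0$ vertices, and let $\pi$ be a fractional $\CC$-complementary packing in $G$ of thickness at most $\varepsilon$. For $X\in G-\CC$ chosen at random according to $\pi$, the expected size of $X$ is at most $\varepsilon n$, since each vertex lies in the random set with probability at most $\varepsilon$. Hence there exists $X\in G-\CC$ with $\pi(X)>0$ and $|X|\le \varepsilon n$. Since $G-X\in\CC$ and $|V(G-X)|\le n$, the graph $G-X$ has a balanced separation $(A,B)$ of size at most $s_\CC(n)\le\varepsilon n$, so its separator $V_0=V(A)\cap V(B)$ satisfies $|V_0|\le\varepsilon n$ and the two sides $V_1=V(A)\setminus V(B)$, $V_2=V(B)\setminus V(A)$ each have at most $2(n-|X|)/3\le 2n/3$ vertices.

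Now set $S=X\cup V_0$. Then in $G-S=(G-X)-V_0$ there are no edges between $V_1$ and $V_2$, so $S$ is a separator whose removal leaves the parts $V_1,V_2$, each of size at most $2n/3$; that is, $(G[V_1\cup S],G[V_2\cup S])$ (with edges incident to $S$ distributed arbitrarily between the two halves) is a balanced separation of $G$ of size $|S|\le |X|+|V_0|\le 2\varepsilon n=\eta n$. Hence $s_\GG(n)\le\eta n$ for all $n\ge n_0$, completing the proof.

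There is no real obstacle here; the only mildly delicate point is checking that combining $X$ with a balanced separator of $G-X$ still yields a balanced separation of $G$ with respect to the definition in the paper, which reduces to the observation that any path in $G$ between $V_1$ and $V_2$ either uses a vertex of $X$ or lies in $G-X$ and hence meets $V_0$.
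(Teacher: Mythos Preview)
Your proof is correct and follows essentially the same approach as the paper's: pick an $\varepsilon$-witness $\CC$, use the thickness bound to find $X\in G-\CC$ with $|X|\le\varepsilon n$ by averaging, take a balanced separator of $G-X$ of size at most $s_\CC(n)$, and add $X$ back in to get a balanced separation of $G$. The only cosmetic difference is that the paper phrases it as a contradiction from $\limsup_{n\to\infty} s_\GG(n)/n=\delta>0$, whereas you argue directly that $s_\GG(n)\le\eta n$ for large $n$.
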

\begin{proof}
Suppose for a contradiction that $\limsup_{n\to\infty} s_\GG(n)/n=\delta>0$.  
Let $\CC$ be a $(\delta/4)$-witness of the fractional $\PP$-fragility of $\GG$; by the assumptions, we have $\lim_{n\to\infty} s_\CC(n)/n=0$. 
Let $n_0\ge 0$ be the smallest integer such that $s_\CC(n)<\frac{\delta}{4}n$ for every $n\ge n_0$.

Since $\limsup_{n\to\infty} s_\GG(n)/n=\delta$, there exists a graph $G\in \GG$ on $n\ge n_0$ vertices such that every balanced separation in $G$
has size at least $\frac{\delta}{2} n$.
Let $\pi$ be a fractional $\CC$-complementary packing of thickness at most $\delta/4$ in $G$.  The expected size of a set chosen at random
according to $\pi$ is at most $\delta n/4$, and thus there exists $X\in G-\CC$ such that $|X|\le \delta n/4$.
Let $(A',B')$ be a balanced separation of $G-X$ of size at most $s_\CC(n)<\frac{\delta}{4}n$.
Let $(A,B)$ be a corresponding separation of $G$ with $A'\subseteq A$, $B'\subseteq B$ and $V(A)\cap V(B)=(V(A')\cap V(B'))\cup X$.
Note that $(A,B)$ is a balanced separation in $G$.
However, $|V(A)\cap V(B)|\le |X|+s_\CC(n)<\frac{\delta}{2}n$, which contradicts the choice of $G$.
\end{proof}

By the previous lemma, having sublinear separators is a necessary condition for fractional $\Vs$-fragility.
Further necessary condition is bounded maximum degree.

\begin{observation}\label{obs-dgneed}
If a class $\GG$ is fractionally $\Vs$-fragile, then it has bounded maximum degree.
\end{observation}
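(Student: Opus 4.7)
The plan is to argue the contrapositive: a class $\GG$ with unbounded maximum degree cannot be fractionally $\Vs$-fragile. Fix any $\varepsilon\in(0,1/2)$, say $\varepsilon=1/4$, and assume for contradiction that some $\CC\in\Vs$ is an $\varepsilon$-witness of the fractional $\Vs$-fragility of $\GG$; let $t$ be an integer such that every connected component of every graph in $\CC$ has at most $t$ vertices. Under the assumption of unbounded maximum degree, I can select $G\in\GG$ containing a vertex $v$ whose degree $d$ is arbitrarily large, and in particular much larger than any function of $\varepsilon$ and $t$.

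The heart of the argument is to estimate $E[|N(v)\cap X|]$ in two ways, where $X\in G-\CC$ is sampled according to a fractional $\CC$-complementary packing $\pi$ of thickness at most $\varepsilon$. By the thickness bound applied to each neighbor of $v$, linearity of expectation gives the upper bound $E[|N(v)\cap X|]\le \varepsilon d$. For the lower bound, observe that whenever $v\notin X$, every neighbor $u\in N(v)\setminus X$ satisfies $uv\in E(G-X)$ and therefore lies in the same component of $G-X$ as $v$; since that component has at most $t$ vertices, at most $t-1$ neighbors of $v$ escape $X$, so $|N(v)\cap X|\ge d-t+1$ conditional on $v\notin X$. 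Because $\Pr[v\notin X]\ge 1-\varepsilon$, this yields $E[|N(v)\cap X|]\ge (1-\varepsilon)(d-t+1)$.

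Combining the two bounds gives $(1-\varepsilon)(d-t+1)\le\varepsilon d$, which rearranges to
\[
d\le \frac{(1-\varepsilon)(t-1)}{1-2\varepsilon},
\]
a constant depending only on $\varepsilon$ and $t$. Choosing $v$ with $d$ exceeding this constant provides the contradiction, so $\GG$ must have bounded maximum degree. I do not expect any real obstacle here; the only delicate point is the lower bound step, which relies on the elementary observation that deleting $X$ cannot separate $v$ from any of its neighbors that also avoid $X$, and hence forces most of $N(v)$ into $X$ once $d$ is much larger than the component-size bound $t$.
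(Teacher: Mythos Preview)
Your proof is correct and uses essentially the same idea as the paper: if $v\notin X$ then all neighbors of $v$ outside $X$ lie in $v$'s component of $G-X$, so at most $t-1$ of them can avoid $X$. The only cosmetic difference is that the paper (with $\varepsilon=1/3$) applies Markov's inequality to $|N(v)\cap X|$ and a union bound to find a single $X$ with $v\notin X$ and $|N(v)\cap X|\le |N(v)|/2$, directly yielding $|N(v)|\le 2s-2$, whereas you compare upper and lower bounds on $E[|N(v)\cap X|]$; both routes amount to the same averaging argument.
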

\begin{proof}
Let $\CC$ be a $(1/3)$-witness of the fractional $\Vs$-fragility of $\GG$.
Let $s$ be the maximum size of a component of a graph from $\CC$.

Consider any graph $G\in \GG$, and let $\pi$ be its fractional $\CC$-complementary packing of thickness at most $1/3$.
Let $v$ be any vertex of $G$ and let $N$ be the set of all neighbors of $v$.
The probability that a set $X$ chosen at random according to $\pi$ contains $v$ is at most $1/3$.
Furthermore, the expected size of the intersection of $X$ and $N$ is at most $|N|/3$, and by Lemma~\ref{lemma-markov}, the probability
that $|X\cap N|>|N|/2$ is less than $2/3$.  Therefore, there exists $X\in G-\CC$ such that $v\not\in X$ and $|X\cap N|\le |N|/2$.
Since $G-X$ contains $v$ and at least $|N|/2$ of its neighbors, it has a component of size at least $|N|/2+1$.  Since
every component of $G-X$ has size at most $s$, it follows that $|N|\le 2s-2$.  Therefore, every graph from $\GG$ has maximum degree
at most $2s-2$.
\end{proof}

As the main result of this section, we show that for subgraph-closed graph classes, these necessary conditions are almost sufficient
(we require strongly sublinear separators).  
Let us recall a strong separation property for graphs with bounded tree-width, see~\cite{rs2}.
\begin{lemma}\label{lemma-septw}
For any graph $G$ and a set $X\subseteq V(G)$, there exists a separation $(A,B)$ of $G$ of size at most $\tw(G)+1$ such that
$|X\setminus V(A)|,|X\setminus V(B)|\le 2|X|/3$.
\end{lemma}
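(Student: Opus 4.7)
The plan is the classical weighted-centroid argument for tree decompositions. Fix a tree decomposition $(T,\beta)$ of $G$ of width $\tw(G)$. For each $x\in X$ pick a single node $f(x)\in V(T)$ with $x\in\beta(f(x))$, and weight each $t\in V(T)$ by $w(t)=|f^{-1}(t)|$, so that $\sum_{t\in V(T)}w(t)=|X|$.

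Next I would invoke the standard existence of a weighted centroid in $T$: a node $u\in V(T)$ such that every component of $T-u$ has total weight at most $|X|/2$. (Orient each edge of $T$ from the lighter to the heavier side of its removal; a sink of this orientation exists in any finite tree, and any sink is such a node.) Let $T_1,\ldots,T_k$ be the components of $T-u$ and let $W_i$ be the total $w$-weight in $T_i$.

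I then want to partition $\{1,\ldots,k\}$ into $I_A$ and $I_B$ with $\sum_{i\in I_A}W_i,\sum_{i\in I_B}W_i\le 2|X|/3$. If some $W_i\ge|X|/3$, take $I_A=\{i\}$: then $W_i\le|X|/2\le 2|X|/3$ and the other side has total at most $|X|-|X|/3=2|X|/3$. Otherwise every $W_i<|X|/3$, and the rule ``repeatedly place the next item on the currently lighter side'' suffices: if $W_j$ is the last item placed on the eventually heavier side, the two sides differ by at most $W_j$ at the moment of its placement, so the final heavier side is at most $(|X|+W_j)/2<2|X|/3$.

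Finally, set $V_A=\beta(u)\cup\bigcup_{i\in I_A}\bigcup_{t\in V(T_i)}\beta(t)$ and $V_B=\beta(u)\cup\bigcup_{i\in I_B}\bigcup_{t\in V(T_i)}\beta(t)$; for each edge $vw\in E(G)$ pick a node $t$ with $\{v,w\}\subseteq\beta(t)$ and assign the edge to $A$ if $t\in\{u\}\cup\bigcup_{i\in I_A}V(T_i)$ and to $B$ otherwise. Let $A,B$ be the resulting edge-disjoint subgraphs. Two verifications remain: first, $V(A)\cap V(B)=\beta(u)$, since by the subtree-connectivity axiom any vertex lying in bags on two different sides of $u$ must already belong to $\beta(u)$, giving $|V(A)\cap V(B)|\le\tw(G)+1$; second, if $x\in X\setminus V(A)$ then $f(x)\in\bigcup_{i\in I_B}V(T_i)$, so $|X\setminus V(A)|\le\sum_{i\in I_B}W_i\le 2|X|/3$, and the symmetric argument handles $B$. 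There is no real obstacle: the only non-bookkeeping ingredient is the weighted centroid, which is entirely classical, and the only mild care is in the partition step, which is why we split into the two cases above.
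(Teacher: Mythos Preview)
Your argument is correct and is precisely the classical weighted-centroid proof of this separation property of tree decompositions. The paper does not give its own proof of this lemma at all; it simply cites it as a known result from Robertson and Seymour (Graph Minors~II), so there is nothing to compare against beyond noting that your write-up is the standard proof one would find behind that citation.
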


We need an auxiliary lemma.

\begin{lemma}\label{lemma-few}
Every graph $G$ has a rooted tree decomposition $(T,\beta)$ with bags of size at most $12(\tw(G)+1)(\Delta(G)+1)$ such that every vertex of $T$ has at most
two sons and for each $v\in V(G)$, the subtree $T[\{u:v\in \beta(u)\}]$ has depth at most $1+4\log(\Delta(G)+1)$.
\end{lemma}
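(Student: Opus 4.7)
The plan is to construct the desired decomposition by a recursive procedure driven by Lemma~\ref{lemma-septw}. Let $w=\tw(G)+1$ and $d=\Delta(G)+1$. The procedure takes an induced subgraph $H$ of $G$ together with a ``boundary'' set $B\subseteq V(H)$ with $|B|\le 3w$, and returns a rooted binary tree decomposition of $H$ whose root bag is $N_H[B]\cup S$ for a separator $S$ produced at the current step. The top-level call is with $H=G$ and $B=\emptyset$, and the base case returns a single bag $V(H)$ when $V(H)=B$.

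At each recursive step, I apply Lemma~\ref{lemma-septw} to $H$ with $X=B$, obtaining a separation $(A_1,A_2)$ of size at most $w$ with $S=V(A_1)\cap V(A_2)$ such that $|B\setminus V(A_i)|\le 2|B|/3$ for $i=1,2$. Let $A_i'=V(A_i)\setminus S$. The root bag is set to $N_H[B]\cup S$, and the two children are the recursive outputs on $(G[V(A_i)],(B\cap V(A_i))\cup S)$. The boundary invariant is preserved, since
$$|(B\cap V(A_i))\cup S|=|B\cap A_i'|+|S|\le 2|B|/3+w\le 3w.$$
Hence the root bag has size at most $d\cdot|B|+|S|\le 3wd+w\le 12wd$, meeting the claimed bag-size bound, and the tree is binary by construction.

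The main obstacle is the per-vertex depth bound. A vertex $v$ belongs to the current bag iff $v\in S$ or $N_G[v]\cap B\neq\emptyset$. I would argue that along the recursion path containing $v$, the ``weight'' $|N_G[v]\cap B|$ drops by a factor of at least $3/4$ at each step, so within $\log_{4/3}d\le 4\log d$ steps either $v$ has dropped from the bag or $v$ has been absorbed into the separator; in the latter case a single extra level accounts for the $1+$ in the bound $1+4\log(\Delta(G)+1)$.  The geometric decrease does not follow directly from balancing $X=B$, which controls $B$ only in aggregate; I would address this by using a refined choice of $X$ (such as $X=N_H[B]$, balancing the closed neighborhood), or by iterating Lemma~\ref{lemma-septw} within a single recursive step to balance several quantities simultaneously.

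Finally, the connected-subtree requirement of a tree decomposition must be verified: a vertex $v$ that drops from the bag at some child must not be ``reintroduced'' at a descendant via adjacency to a newly-chosen separator. This is enforced by enlarging the bag to $N_H[B\cup S]$ (still of size at most $d(|B|+|S|)\le 4wd\le 12wd$), which ensures that any vertex adjacent to the current separator is already present in the parent bag and therefore never reappears out of nowhere. With this adjustment, each $v$'s occurrences form a connected subtree of depth at most $1+4\log(\Delta(G)+1)$, completing the argument.
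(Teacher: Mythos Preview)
Your recursive skeleton is essentially the paper's, but the heart of the argument---the per-vertex depth bound---is genuinely missing, and your proposed patches do not close the gap. Concretely, take $v\in V(A_i)\setminus S$. Since $(A_1,A_2)$ is a separation, every neighbour of $v$ lies in $V(A_i)$, so $N_G[v]\cap B\cap V(A_i)=N_G[v]\cap B$. Hence the new boundary $B'=(B\cap V(A_i))\cup S$ satisfies
\[
N_G[v]\cap B'=(N_G[v]\cap B)\cup(N_G[v]\cap S)\supseteq N_G[v]\cap B.
\]
Your ``weight'' $|N_G[v]\cap B|$ is therefore \emph{non-decreasing} along the recursion path through $v$; it never drops, let alone by a constant factor. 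Once $v$ enters a bag it stays in every descendant bag on its side, so the subtree containing $v$ has unbounded depth. Balancing $X=N_H[B]$ instead of $X=B$ does not help: that controls $N_H[B]$ in aggregate, not $N_G[v]\cap B$ for any particular $v$, and the new boundary still absorbs the separator $S$, which can repeatedly contribute neighbours of $v$.

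The idea you are missing is that the paper alternates between two modes. It allows the root set $Z$ to be large (up to $\Delta(G)\cdot 12(\tw(G)+1)$). While $|Z|>b:=12(\tw(G)+1)$ it applies Lemma~\ref{lemma-septw} as you do; since the separator has size at most $\tw(G)+1<|Z|/12$, one gets $|Z_A|,|Z_B|<\tfrac{3}{4}|Z|$, so the \emph{global} quantity $|Z|$ shrinks geometrically. When $|Z|\le b$, the algorithm does something you never do: it \emph{deletes} a superset $Z'\supseteq Z$ of size $b$ from the graph and restarts with root set $Z''=N(Z')$ of size at most $\Delta(G)b$. Thus a vertex enters $Z$ at some level, rides $Z$ for at most $\lceil\log(\Delta(G))/\log(4/3)\rceil\le 4\log(\Delta(G)+1)$ separation steps while $|Z|$ shrinks to $\le b$, and is then removed; that is what yields the bound $1+4\log(\Delta(G)+1)$.
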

\begin{proof}
Let $b=12(\tw(G)+1)$ and $w=\Delta(G)b$.

Let us consider the following algorithm to obtain a tree decomposition.
\begin{description}
\item[input:] A graph $H$ of maximum degree at most $\Delta(G)$ and tree-width at most $\tw(G)$, and
a set $Z\subseteq V(H)$ of size at most $w$ (which we call the \emph{root set}).
\item[output:] A rooted tree decomposition of $H$ with bags of size at most $w+b$, whose root bag contains $Z$.
\end{description}
\begin{itemize}
\item If $|V(H)|\le w+b$, then let the decomposition consist of a single bag containing all vertices.
\item If $|Z|\le b$, then let $Z'$ be a superset of $Z$ of size $b$ and let $Z''$ consist of all vertices of $V(H)\setminus Z'$ that have a neighbor in $Z'$.
Let us apply the algorithm recursively to $H-Z'$ with the root set $Z''$ (note that this is possible,
since $|Z''|\le \Delta(H)|Z'|\le w$).  To the root of the resulting decomposition, attach a father node
whose bag is $Z'\cup Z''$.
\item Otherwise, let $(A,B)$ be a separation of $H$ of size at most $\tw(H)+1$ with $|Z\setminus V(A)|,|Z\setminus V(B)|\le 2|Z|/3$
which exists by Lemma~\ref{lemma-septw}.
Let $Z_A=(V(A)\cap V(B))\cup (Z\setminus V(B))$ and $Z_B=(V(A)\cap V(B))\cup (Z\setminus V(A))$.  Apply the algorithm recursively to
$A$ with the root set $Z_A$, and to $B$ with the root set $Z_B$ (this is possible, since $|Z_A|,|Z_B|<|Z|\le w$ as we argue below).
Add a common father node of their roots with bag $Z\cup V(A\cap B)$ to the resulting tree decomposition.
\end{itemize}

To obtain the required tree decomposition of $G$, we run the described algorithm for $G$ with the empty root set.
Note that the case that $|Z|>b$ can only be reached if $\Delta(G)\ge 2$, and that in this case $|Z_A|,|Z_B|\le 2|Z|/3+\tw(G)+1<3|Z|/4$.
Therefore, after at most $\lceil\log(w/b)/\log(4/3)\rceil\le 4\log(\Delta(G)+1)$ levels of recursion, we reach the case that $|Z|\le b$,
and all the vertices of $Z$ are excluded from the graph in the next recursive call.
Hence, every vertex appears in the bags of at most $2+4\log(\Delta(G)+1)$ consecutive levels of the tree decomposition.
\end{proof}

It is easy to see that if every subgraph of an $n$-vertex graph $G$ has a balanced separator of size at most $b$, then
$G$ has treewidth $O(b\log n)$.  Recently, a stronger claim was proved (the weaker bound with the logarithmic
factor would suffice for the purposes of this paper, however using Theorem~\ref{thm-dnorin} simplifies the computations
a bit).

\begin{theorem}[Dvo\v{r}\'ak and Norin~\cite{dnorin}]\label{thm-dnorin}
If $\GG$ is a subgraph-closed class of graphs, then every graph $G\in\GG$ has treewidth
at most $105s_\GG(|V(G)|)$.
\end{theorem}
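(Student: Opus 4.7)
The plan is to prove Theorem~\ref{thm-dnorin} by recursively constructing a tree decomposition, in the spirit of the proof of Lemma~\ref{lemma-few} but replacing the assumption that $\tw(H)$ is known with a direct use of the separator bound. Set $s = s_\GG(|V(G)|)$; since $s_\GG$ is nondecreasing and $\GG$ is subgraph-closed, every subgraph $H$ of $G$ belongs to $\GG$ and satisfies $s_\GG(|V(H)|) \le s$. I would strengthen the statement inductively to: for every subgraph $H$ of $G$ and every ``boundary'' set $Z \subseteq V(H)$ with $|Z| \le 70s$, there is a rooted tree decomposition of $H$ whose root bag contains $Z$ and whose bags all have size at most $105s+1$. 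The theorem then follows by taking $H = G$ and $Z = \emptyset$.

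The recursion would split into two regimes, matching the structure of Lemma~\ref{lemma-few}. In the \emph{large-$Z$ regime} (say $|Z| > 35s$), I find a separation $(A,B)$ of $H$ of size at most $s$ that balances $Z$, meaning $|Z \setminus V(A)|,\, |Z \setminus V(B)| \le 2|Z|/3$. The root bag is then $Z \cup (V(A) \cap V(B))$, of size at most $71s$, and the recursive calls on $A$ and $B$ use new boundaries of size at most $2|Z|/3 + s$, which fits back within the permitted range $70s$. In the \emph{small-$Z$ regime}, I use the hypothesis directly to find a standard balanced separator $S$ of $H$ of size at most $s$; the root bag becomes $Z \cup S$, of size at most $36s$, and I recurse on each component of $H - S$ enlarged by $S$, using $S$ as the new boundary. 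In this case $|V(H)|$ shrinks by a factor of $2/3$, so after a bounded number of successive small-$Z$ steps one reaches the base case $|V(H)| \le 105s + 1$, handled by a single bag. A potential function along the lines of $|V(H)| + c|Z|$, for a suitable constant $c$, makes termination and the bag-size bound precise.

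The main obstacle is executing the large-$Z$ regime, since the hypothesis $s_\GG(n) \le s$ only guarantees a balanced separation of the entire vertex set of $H$, not of an arbitrary subset $Z$. Bridging this gap is exactly what eliminates the $\log n$ factor present in the naive $O(s \log n)$ treewidth bound that one gets by only iterating small-$Z$ steps. The standard resolution is a weighted strengthening: the separator hypothesis robustly extends to balancing any vertex weighting, which can be obtained either by applying the original hypothesis to an auxiliary subgraph-closed blow-up of $H$ in which each vertex of $Z$ is given many copies (so that balancing the vertex set forces balancing $Z$), or by invoking bramble/tangle duality — a bramble of order $> 105s$ in $H$ would yield, after taking a suitable subgraph covered by its elements, a graph in $\GG$ with no balanced separator of size $\le s$, contradicting the hypothesis. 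The constant $105$ emerges from tuning the regime threshold against the accumulated slack in the root bags; a looser analysis would give an absolute constant but a larger one.
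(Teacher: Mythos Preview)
The paper does not prove Theorem~\ref{thm-dnorin}; it is quoted from~\cite{dnorin} as an external input, and the surrounding text even remarks that the weaker $O(s_\GG(n)\log n)$ bound would suffice for the paper's purposes. So there is no proof here to compare your proposal against.

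On the merits of the proposal itself: the recursive scheme and your identification of the obstacle---balancing an arbitrary boundary set $Z$ rather than the whole vertex set---are both correct; this is precisely what separates the $O(s)$ bound from the easy $O(s\log n)$ one. But your first proposed resolution does not work. A blow-up of $H$ in which vertices of $Z$ are replicated is \emph{not} a subgraph of $G$, so subgraph-closure of $\GG$ tells you nothing about separators in the blow-up; the hypothesis gives you exactly $s_\GG$, not a weighted separator theorem, and there is no free upgrade. Your second suggestion, via bramble/tangle duality, points in the right direction and is indeed closer to how~\cite{dnorin} proceeds, but the sentence ``a bramble of order $>105s$ would yield \ldots\ a subgraph with no balanced separator of size $\le s$'' is not an argument---it is a restatement of what has to be proved. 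Extracting from a high-order bramble a concrete subgraph all of whose balanced separations are large is the substantive content of~\cite{dnorin}, and it requires a genuinely new construction rather than a one-line appeal to duality. As written, the proposal has the right skeleton but no working mechanism for the key step.
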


We are now ready to prove the main result of this section, which is a reformulation of Lemma~\ref{lemma-split}.

\begin{lemma}\label{lemma-sublind}
Let $\GG$ be subgraph-closed class of graphs.  If $\GG$ has bounded maximum degree and strongly sublinear separators,
then $\GG$ is fractionally $\Vs$-fragile.  Furthermore, there exists a constant $b>1$ such that for every $0<\varepsilon\le 1$,
the class $\CC_\varepsilon$ of all graphs in $\GG$ such that all their components have at most $b^{1/\varepsilon}$ vertices is an $\varepsilon$-witness
of the fractional $\Vs$-fragility of $\GG$.
\end{lemma}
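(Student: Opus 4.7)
The plan is to construct, for each $G\in\GG$, a fractional $\CC_\varepsilon$-complementary packing via a sequential random process that iteratively cuts shifted depth-layers of tree decompositions. Let $\Delta$ be the maximum degree bound of $\GG$, let $c\ge 1$ and $0\le\delta<1$ satisfy $s_\GG(n)\le cn^\delta$, and set $\alpha=(1-\delta)/2$ and $d=2+4\log(\Delta+1)$. I will fix a constant $b=b(\Delta,c,\delta)>1$ later, put $B=b^{1/\varepsilon}$, and let $\CC_\varepsilon$ be the class of graphs in $\GG$ each of whose connected components has at most $B$ vertices.

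The random process maintains a collection of pairwise vertex-disjoint subgraphs of $G$ called \emph{pieces}; initially the only piece is $G$. While some piece $H$ has more than $B$ vertices, I would apply Theorem~\ref{thm-dnorin} (giving $\tw(H)\le 105c|V(H)|^\delta$) together with Lemma~\ref{lemma-few} to obtain a rooted binary tree decomposition $(T_H,\beta_H)$ of $H$ with bags of size at most $C|V(H)|^\delta$ for a constant $C=C(\Delta,c)$, with each vertex of $H$ contained in bags spanning at most $d$ depths of $T_H$. I would then sample a shift $t_H\in\{0,\ldots,p_H-1\}$ independently and uniformly, with $p_H=\lceil\alpha\log_2|V(H)|\rceil$, and let $S_H$ be the union of the bags $\beta_H(u)$ over all $u\in V(T_H)$ whose depth is congruent to $t_H$ modulo $p_H$. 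The piece $H$ is then replaced by the components of $H-S_H$, and the random set returned by the packing is $X=\bigcup_H S_H$, the union over all pieces $H$ processed by the procedure.

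Each component of $H-S_H$ lies within a subtree of $T_H$ of depth at most $p_H-2$, so it contains at most $C|V(H)|^\delta\cdot 2^{p_H}\le C'|V(H)|^{(1+\delta)/2}$ vertices; piece sizes therefore shrink with each iteration, and at termination every component of $G-X$ has at most $B$ vertices, so $X\in G-\CC_\varepsilon$. For the thickness, conditional on $v$ lying in piece $H$ the event $v\in S_H$ has probability at most $d/p_H$, since $v$'s bag-subtree in $T_H$ occupies at most $d$ of the $p_H$ depth residues. Writing $H_0(v),H_1(v),\ldots,H_{k-1}(v)$ for the pieces visited by $v$ before it enters a leaf piece or is removed, the union bound yields $\Pr[v\in X]\le(d/\alpha)\sum_{i=0}^{k-1}1/\log_2|V(H_i(v))|$.

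The main obstacle is controlling this sum. The naive estimate $|V(H_i(v))|>B$ gives only $O(k/\log_2 B)$, which is too weak because $k$ can be of order $\log\log|V(G)|$. To handle this I would unroll the shrinkage bound $|V(H_{i+1}(v))|\le C|V(H_i(v))|^{(1+\delta)/2}$ backwards from the termination condition: if the process continues for $k-1-i$ further steps past iteration $i$, each intermediate piece must exceed $B$, and iterating the shrinkage then yields the exponential lower bound $\log_2|V(H_i(v))|\ge \tfrac{1}{2}(2/(1+\delta))^{k-1-i}\log_2 B$, provided $\log_2 B$ dominates the constants. The sum telescopes into a convergent geometric series of magnitude $O(1/((1-\delta)\log_2 B))$, so the total thickness is $O(d/((1-\delta)^2\log_2 B))$. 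Choosing $b$ large enough (say $\log_2 b$ at least an absolute constant times $d/(1-\delta)^2$) makes this at most $\varepsilon$ whenever $B=b^{1/\varepsilon}$, completing the proof.
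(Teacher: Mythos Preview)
Your proof is correct and follows essentially the same approach as the paper: both iterate the random depth-layer slicing of the tree decomposition supplied by Lemma~\ref{lemma-few} and Theorem~\ref{thm-dnorin}, and bound the thickness by a geometric series in the shrinkage ratio. The only organizational difference is that the paper fixes the layer modulus at step $i$ from a deterministic upper bound $n_i$ on piece sizes (so the probability sum has deterministic terms, analyzed forward from $|V(G)|$ with an explicitly chosen number $t$ of iterations), whereas you set the modulus from the actual piece size and then bound the random sum $\sum_i 1/\log_2|V(H_i(v))|$ pointwise by unrolling the shrinkage backward from the termination threshold $B$.
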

\begin{proof}
Let $c\ge 1$ and $0\le \delta<1$ be real numbers such that $s_\GG(n)\le cn^\delta$ for every $n\ge 0$.
Let $\Delta\ge 0$ be an integer such that every graph in $\GG$ has maximum degree at most $\Delta$.  Let $\iota>0$ be chosen arbitrarily so that
$\delta+\iota<1$.  Let $c_1=105c$, so that every graph in $\GG$ on $n$ vertices has treewidth at most $c_1n^\delta$ by Theorem~\ref{thm-dnorin}.
Let $c_2=24(c_1+1)(\Delta+1)$, $c'_2=c_2^{\frac{1}{1-(\delta+\iota)}}$, $c_3=\frac{1}{\delta+\iota}>1$, $c_4=\frac{2+4\log(\Delta+1)}{(c_3-1)\iota}$,
$c_5=e^{c_3c_4}$ and $b=c'_2c_5$.

We first prove the following auxiliary claim:  
\begin{itemize}
\item[($\star$)]\emph{Let $G$ be a graph in $\GG$ with at most $n$ vertices, and let
$$S=\{X\subseteq V(G):\mbox{every component of $G-X$ has at most $c_2n^{\delta+\iota}$ vertices}\}.$$
There exists a probability distribution $\pi_{G,n}$ on $S$ such that every vertex of $G$ has probability at most
$$\frac{2+4\log(\Delta+1)}{\iota\log n}$$
of appearing in a set chosen according to this distribution.}
\end{itemize}

By the choice of $c_1$, the graph $G$ has tree-width at most $c_1n^\delta$.
Let $(T,\beta)$ be the rooted tree decomposition of $G$ obtained using Lemma~\ref{lemma-few}.
Recall that each vertex of $T$ has at most two sons, and note that each bag of the decomposition has
size at most $12(c_1n^\delta+1)(\Delta+1)\le 12(c_1+1)(\Delta+1)n^\delta=\frac{c_2}{2}n^\delta$.

Let $k=\lceil \iota\log n\rceil$ and for $0\le i\le k-1$,
let $X_i$ be the set of vertices of $G$ which appear in the bags of the decomposition whose distance $d$ from the root satisfies
$d\equiv i\pmod{k}$.
Consider any connected component $H$ of $G-X_i$.  Let $T_H$ be the subtree of $T$ induced by $\{u:\beta(u)\cap V(H)\neq\emptyset\}$
and let $\beta_H:V(T_H)\to 2^{V(H)}$ be defined by $\beta_H(u)=\beta(u)\cap V(H)$ for $u\in V(T_H)$.
Then $(T_H,\beta_H)$ is a rooted tree decomposition of $H$ such that each vertex of $T_H$ has at most two sons
and each bag of $T_H$ has size at most $\frac{c_2}{2}n^\delta$.  Furthermore, by the choice of $X_i$, the tree
$T_H$ has depth at most $k-2$, hence $|V(T_H)|\le 2^k\le 2^{\iota\log n+1}\le 2n^\iota$ and 
$|V(H)|\le \frac{c_2}{2}n^\delta|V(T_H)|\le c_2n^{\delta+\iota}$.
Since this holds for every connected component of $G-X_i$, the set $X_i$ belongs to $S$.

For every $X\in S$, let $\pi_{G,n}(X)=\frac{|\{i:0\le i\le k-1, X_i=X\}|}{k}$.
Since we chose $(T,\beta)$ using Lemma~\ref{lemma-few}, each vertex of $G$ appears
in at most $2+4\log(\Delta+1)$ of the sets $X_0$, \ldots, $X_{k-1}$, and
thus the probability that a vertex appears in a set chosen
according to the distribution $\pi_{G,n}$ is at most $\frac{2+4\log(\Delta+1)}{k}\le \frac{2+4\log(\Delta+1)}{\iota\log n}$.
This finishes the proof of ($\star$).

\bigskip

We now iterate this construction for an $n$-vertex graph $G\in\GG$.
Let $n_0=n$ and $n_{i+1}=c_2n_i^{\delta+\iota}$ for $i\ge 0$.  Note that
\begin{equation}\label{eq-ni}
n_i\le c_2^{1+(\delta+\iota)+(\delta+\iota)^2+\ldots}n^{(\delta+\iota)^i}=c_2^{\frac{1}{1-(\delta+\iota)}}n^{(\delta+\iota)^i}=c'_2n^{(\delta+\iota)^i}=c'_2n^{1/c_3^i}
\end{equation}
and since $c_2\ge 1$,
\begin{equation}\label{eq-nilb}
n_i\ge n^{({\delta+\iota})^i}=n^{1/c_3^i}.
\end{equation}
Consider a graph $G\in\GG$ with $n$ vertices.  Let $G_0=G$.
For $i\ge 0$, $G_i$ will be some subgraph of $G$ such that each component of $G_i$ has size at most $n_i$.
Note that since $\GG$ is subgraph-closed, every component of $G_i$ belongs to $\GG$.
To construct $G_{i+1}$, for each component $G'_i$ of $G_i$ consider the probability distribution $\pi_{G'_i, n_i}$ obtained in ($\star$) and choose a subset
of $V(G'_i)$ at random according to this distribution (independently in each component).  Let $X_i$ be the union of all these subsets and let $G_{i+1}=G_i-X_i$.

For an integer $t\ge 0$, let $Y_t=X_0\cup X_1\cup \ldots\cup X_{t-1}$.  Note that $Y_t$ is a subset of $V(G)$ chosen at random
according to a probability distribution described by the construction of the previous paragraph, and that
each component of $G-Y_t$ has size at most $n_t$.
The probability that a vertex $v\in V(G)$ belongs to this set $Y_t$
is at most
\begin{equation}\label{eq-pry}
\sum_{i=0}^{t-1} \text{Prob}[v\in X_i]\le \sum_{i=0}^{t-1} \frac{2+4\log(\Delta+1)}{\iota\log n_i}=\frac{2+4\log(\Delta+1)}{\iota}\sum_{i=0}^{t-1} \frac{1}{\log n_i}
\end{equation}
by ($\star$).  By (\ref{eq-nilb}) we have $\log n_i\ge \frac{1}{c_3^i}\log n$, and thus

$$\sum_{i=0}^{t-1} \frac{1}{\log n_i}\le \frac{1}{\log n}\sum_{i=0}^{t-1}c_3^i=\frac{c_3^t-1}{(c_3-1)\log n}\le\frac{c_3^t}{(c_3-1)\log n}.$$
Combined with (\ref{eq-pry}), this implies that the probability that $v$ belongs to $Y_t$ is at most
\begin{equation}\label{eq-pryt}
\frac{2+4\log(\Delta+1)}{\iota}\sum_{i=0}^{t-1} \frac{1}{\log n_i}\le \frac{(2+4\log(\Delta+1))c_3^t}{\iota(c_3-1)\log n}=\frac{c_4c_3^t}{\log n}.
\end{equation}

Consider any $\varepsilon$ such that $0<\varepsilon\le 1$.  Recall that $\CC_\varepsilon$ is the class of all graphs in $\GG$ such that all their components have at most $b^{1/\varepsilon}$ vertices.

If $\log n<c_4/\varepsilon$, then $|V(G)|=n=e^{\log n}<e^{c_4/\varepsilon}<b^{1/\varepsilon}$, and thus 
$G\in \CC_\varepsilon$ and setting $\pi(\emptyset)=1$ and $\pi(X)=0$ for every non-empty $X\subseteq V(G)$ gives a fractional $\CC_\varepsilon$-complementary packing in
$G$ of thickness $0$.

Suppose now that $\log n\ge c_4/\varepsilon$.
Fix $t\ge 0$ as the largest integer such that $c_4c_3^t/\log n\le \varepsilon$, and let $\pi$ be the probability distribution on the subsets
of $V(G)$ described by the process generating the set $Y_t$ (so $Y_t$ is chosen at random from the distribution $\pi$).
By the maximality of $t$, we have $c_3^{t+1}>\frac{\varepsilon\log n}{c_4}$, and $c_3^t>\frac{\varepsilon\log n}{c_3c_4}$.
As we observed before, each component of $G-Y_t$ has size at most $n_t$.  However, by (\ref{eq-ni}),
$$n_t\le c'_2n^{1/c_3^t}\le c'_2n^{\frac{c_3c_4}{\varepsilon\log n}}=c'_2e^{\frac{c_3c_4}{\varepsilon}}=c'_2c_5^{1/\varepsilon}\le b^{1/\varepsilon},$$
and thus $G-Y_t\in \CC_\varepsilon$.
By (\ref{eq-pryt}), the probability that a vertex $v\in V(G)$ belongs to
$Y_t$ is at most $\frac{c_4c_3^t}{\log n}\le \varepsilon$.
Therefore, $\pi$ is a fractional $\CC_\varepsilon$-complementary packing in $G$ of thickness at most $\varepsilon$.

Since this holds for every $\varepsilon>0$, we conclude that $\GG$ is fractionally $\Vs$-fragile and that $\CC_\varepsilon$ is an $\varepsilon$-witness of the fractional $\Vs$-fragility of $\GG$.
\end{proof}

Let us remark that for every $k\ge 0$, the class of all graphs with treewidth at most $k$ has strongly sublinear separators.  Hence,
Lemma~\ref{lemma-sublind} together with Lemma~\ref{lemma-trans} implies the following.

\begin{corollary}\label{cor-twvs}
Every fractionally $\Tw$-fragile class with bounded maximum degree is fractionally $\Vs$-fragile.
\end{corollary}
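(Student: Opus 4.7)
The plan is to combine Lemma~\ref{lemma-sublind} with the transitivity statement of Lemma~\ref{lemma-trans}; the only technical point is that classes in $\Tw$ need not have bounded maximum degree, whereas Lemma~\ref{lemma-sublind} requires this as a hypothesis. I would handle this by refining the class property before applying transitivity.

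Let $\Delta$ be an upper bound on the maximum degree of graphs in $\GG$, and let $\PP_\Delta$ denote the class property consisting of all classes whose graphs have bounded treewidth and maximum degree at most $\Delta$. The first step is to observe that $\GG$ is fractionally $\PP_\Delta$-fragile. Given any $\varepsilon>0$ and an $\varepsilon$-witness $\CC$ of the fractional $\Tw$-fragility of $\GG$, consider the subclass $\CC' = \{H\in\CC : \Delta(H)\le \Delta\}$. Since $\CC'\subseteq \CC$, the class $\CC'$ still has bounded treewidth and therefore belongs to $\PP_\Delta$. Moreover, for every $G\in\GG$ and every $X\subseteq V(G)$, the graph $G-X$ is a subgraph of $G$, so $\Delta(G-X)\le\Delta$; hence $G-\CC=G-\CC'$, and any fractional $\CC$-complementary packing of $G$ is also a fractional $\CC'$-complementary packing of the same thickness. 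So $\CC'$ is an $\varepsilon$-witness of the fractional $\PP_\Delta$-fragility of $\GG$.

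The second step is to verify that every class in $\PP_\Delta$ is fractionally $\Vs$-fragile so that Lemma~\ref{lemma-trans} can be applied. If $\mathcal{D}\in\PP_\Delta$, then there is a constant $w$ such that every graph in $\mathcal{D}$ has treewidth at most $w$; in particular, every graph in $\mathcal{D}$ has a balanced separator of size at most $w+1$, so $s_\mathcal{D}(n)\le w+1$ for all $n$, which is strongly sublinear. Since $\mathcal{D}$ also has maximum degree at most $\Delta$ by definition, the hypotheses of Lemma~\ref{lemma-sublind} are satisfied (after taking the subgraph closure if necessary, which does not affect bounded treewidth or bounded degree), and hence $\mathcal{D}$ is fractionally $\Vs$-fragile.

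The final step is to invoke Lemma~\ref{lemma-trans} with $\PP_1=\PP_\Delta$ and $\PP_2=\Vs$: since $\GG$ is fractionally $\PP_\Delta$-fragile and every class in $\PP_\Delta$ is fractionally $\Vs$-fragile, $\GG$ is fractionally $\Vs$-fragile, which is the desired conclusion. No step presents a genuine obstacle; the only subtlety worth flagging is the maximum-degree mismatch addressed in the first step, which is essential because Observation~\ref{obs-dgneed} shows that fractional $\Vs$-fragility fails in the absence of bounded maximum degree, so we really must carry the bound $\Delta$ through the intermediate class property.
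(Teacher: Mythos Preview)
Your proposal is correct and follows the same route as the paper, which simply states that bounded-treewidth classes have strongly sublinear separators and then invokes Lemma~\ref{lemma-sublind} together with Lemma~\ref{lemma-trans}. Your refinement to the class property $\PP_\Delta$ makes explicit a detail the paper leaves tacit: Lemma~\ref{lemma-trans} literally requires \emph{every} class in $\PP_1$ to be fractionally $\Vs$-fragile, which fails for $\PP_1=\Tw$ by Observation~\ref{obs-dgneed}, so restricting to classes of maximum degree at most $\Delta$ is exactly the right fix before applying transitivity.
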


\section{Expansion in small graphs}\label{sec-small}

Roughly, the aim of this section is to show that for $\varepsilon>0$, if a sufficiently large $n$-vertex graph $G$ satisfies $\nabla_{\lceil \log n\rceil}(G)\ge n^\varepsilon$,
then $G$ contains a subgraph without a strongly sublinear separation.  To do so, we show that for some $\varepsilon'>0$, $G$ contains a shallow minor of a clique $K_s$
with $s\ge n^{\varepsilon'}$.  This clique contains a $3$-regular expander on $s$ vertices as a subgraph, and thus $G$ contains a shallow subdivision
of this $3$-regular expander.  It is easy to see that such a subdivision does not have strongly sublinear separators.

Consider a $\lceil\log n\rceil$-minor $G'$ of $G$ with edge density $\nabla_{\lceil \log n\rceil}(G)\ge n^\varepsilon$.  By Koml\'os and Szemer\'edi~\cite{komszem2} and Thomasson~\cite{minor2},
$G'$ contains $K_{\lfloor n^{\varepsilon/2}\rfloor}$ as a minor (actually, a topological minor).  However, their proofs give no bound
on the depth of the minor.  Topological minors with edges subdivided bounded number of times were studied by Kostochka and Pyber~\cite{kopyb}
and Jiang~\cite{jiang}, however their results do not give polynomially large cliques.  Hence, we need to derive a result combining both shallowness
and polynomial size.  Let us remark that doing so in the terms of topological minors is possible~\cite{dvorak}, however it will be more convenient to
only give the result for minors.

We are going to repeatedly take shallow minors, and the following observation will be useful.

\begin{observation}\label{obs-shallow}
If $H'$ is a $d_1$-minor of $G$ and $H$ is a $d_2$-minor of $H'$, then $H$ is a $(d_1+d_2(2d_1+1))$-minor of $G$.
\end{observation}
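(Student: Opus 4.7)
The plan is to construct, for each vertex $u$ of $H$, a connected subgraph $T_u$ of $G$ of radius at most $d_1+d_2(2d_1+1)$, such that the $T_u$ are pairwise vertex-disjoint and, for each edge $u_1u_2\in E(H)$, some edge of $G$ joins $T_{u_1}$ with $T_{u_2}$.

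Because $H'$ is a $d_1$-minor of $G$, for each $v\in V(H')$ there is a subgraph $T'_v\subseteq G$ of radius at most $d_1$, centered at some $r^*_v$, with the $T'_v$ pairwise vertex-disjoint; and for each edge $vw\in E(H')$ there is an edge $e_{vw}\in E(G)$ with one end in $T'_v$ and the other in $T'_w$. Analogously, $H$ being a $d_2$-minor of $H'$ provides for each $u\in V(H)$ a subgraph $S_u\subseteq H'$ of radius at most $d_2$ centered at some $r_u\in V(S_u)$, with the $S_u$ pairwise vertex-disjoint, plus a certifying edge in $H'$ for every edge of $H$. I would then set
\[
T_u \;=\; \bigcup_{v\in V(S_u)} T'_v \;\cup\; \{e_{vw} : vw\in E(S_u)\},
\]
which is a well-defined subgraph of $G$ since each $e_{vw}$ has its endpoints in $T'_v\cup T'_w\subseteq T_u$.

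Vertex-disjointness of the $T_u$ is inherited from vertex-disjointness of the $S_u$ in $H'$ together with vertex-disjointness of the $T'_v$ in $G$. For any edge $u_1u_2\in E(H)$, the certifying edge in $H'$ is some $v_1v_2$ with $v_1\in V(S_{u_1})$ and $v_2\in V(S_{u_2})$, and then $e_{v_1v_2}$ is an edge of $G$ joining $T'_{v_1}\subseteq T_{u_1}$ with $T'_{v_2}\subseteq T_{u_2}$, as required.

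The only non-trivial step is bounding the radius of $T_u$ from the center $r^*_{r_u}$. Given any $y\in V(T_u)$, write $y\in V(T'_v)$ for some $v\in V(S_u)$ and fix a path $r_u=v_0,v_1,\ldots,v_k=v$ in $S_u$ of length $k\le d_2$. Concatenating (i) a path of length at most $d_1$ inside $T'_{v_0}$ from $r^*_{r_u}$ to the endpoint of $e_{v_0v_1}$ in $T'_{v_0}$, (ii) for each $i=1,\ldots,k-1$, the edge $e_{v_{i-1}v_i}$ followed by a path of length at most $2d_1$ inside $T'_{v_i}$ to the endpoint of $e_{v_iv_{i+1}}$ (using that $T'_{v_i}$ has diameter at most $2d_1$), and (iii) the edge $e_{v_{k-1}v_k}$ followed by a path of length at most $2d_1$ inside $T'_{v_k}$ from its entry point to $y$, yields a walk of length at most $d_1+k(2d_1+1)\le d_1+d_2(2d_1+1)$. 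No real obstacle is anticipated; once the nested certificate structure is set up, the argument is just a careful count of the contribution of each traversed $T'$-bag and each connecting $e$-edge.
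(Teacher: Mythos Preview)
Your proposal is correct and is exactly the standard argument one has in mind here; the paper states this as an observation without proof, and your construction (gluing the $T'_v$ for $v\in V(S_u)$ together along the certifying edges $e_{vw}$, then bounding the radius by $d_1$ for the first bag and $2d_1+1$ for each subsequent hop) is the intended justification. The radius count is right: $d_1+k(2d_1+1)\le d_1+d_2(2d_1+1)$.
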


We also use the following result, which gives shallow minors in very dense graphs.
Let $K'_t$ denote the graph obtained from $K_t$ by subdividing each edge by exactly one vertex.

\begin{lemma}[{Jiang~\cite[Proposition 2.3]{jiang}}]\label{lemma-jiang}
For any $t\ge 1$, if a graph $G$ on $n$ vertices has at least $t^2n^{3/2}$ edges, then $G$ contains $K'_t$ as a subgraph.
\end{lemma}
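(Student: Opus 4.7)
The plan is to prove this by the \emph{dependent random choice} technique. From $|E(G)|\ge t^2n^{3/2}$ the average degree is at least $2t^2\sqrt n$, so Jensen's inequality gives $\sum_{x\in V(G)}d(x)^2\ge n(2t^2\sqrt n)^2=4t^4n^2$. I would choose two vertices $v_1,v_2\in V(G)$ independently and uniformly at random and let $U=N(v_1)\cap N(v_2)$. Each vertex $x$ lies in $U$ with probability exactly $(d(x)/n)^2$, so
$$E[|U|]=\frac{1}{n^2}\sum_{x\in V(G)}d(x)^2\ge 4t^4.$$

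Next I would set the threshold $m=\lceil(t^2+t)/2\rceil$, chosen so that $m\ge\binom{t}{2}+t$ while $m\le t^2$ for $t\ge 2$. Call a pair $\{x,y\}\subseteq V(G)$ \emph{bad} if $|N(x)\cap N(y)|<m$. A bad pair is contained in $U$ with probability at most $(m/n)^2$, so the expected number $Y$ of bad pairs inside $U$ satisfies $E[Y]\le\binom{n}{2}(m/n)^2\le m^2/2\le t^4/2$. By linearity, $E[|U|-Y]\ge 4t^4-t^4/2=7t^4/2\ge t$, hence some outcome yields a set $U$ with $|U|-Y\ge t$. Fix such a realization and delete one vertex from each bad pair inside $U$; the resulting set $U'$ has at least $t$ vertices and every two of them have at least $m$ common neighbors in $G$.

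Finally I would pick any $t$ vertices $w_1,\ldots,w_t\in U'$ as the branch vertices of the target $K'_t$ and, processing the pairs $1\le i<j\le t$ in any order, greedily choose a subdivision vertex $u_{ij}\in N(w_i)\cap N(w_j)$ distinct from every $w_k$ and from every previously chosen $u_{i'j'}$. Since at most $\binom{t}{2}+t-1<m$ vertices are forbidden at any stage while $|N(w_i)\cap N(w_j)|\ge m$, such a choice always exists, producing a copy of $K'_t$ as a subgraph of $G$.

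The only delicate point is the balancing act in the choice of~$m$: it must be at least $\binom{t}{2}+t$ to keep the greedy extension from getting stuck, yet small enough that $m^2/2<4t^4$ so the deletion step leaves a set of size at least~$t$. The value $m\approx t^2/2$ satisfies both comfortably; the cases $t\le 1$ are trivial, and $t=2$ is already amply covered by the same estimates. Notably, the proof uses no structural assumption on $G$ beyond the edge-count hypothesis.
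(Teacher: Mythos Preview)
Your argument is correct. The paper does not give its own proof of this lemma---it is quoted as Proposition~2.3 from Jiang~\cite{jiang}---so there is nothing to compare against here beyond noting that your dependent random choice argument is exactly the standard technique for this type of statement and is essentially the proof in Jiang's paper as well. The arithmetic checks out: with $m=(t^2+t)/2=\binom{t}{2}+t$ (an integer since $t(t+1)$ is even) you have $m\le t^2$, hence $E[Y]\le m^2/2\le t^4/2$ and $E[|U|-Y]\ge 7t^4/2\ge t$; and in the greedy embedding you forbid at most $t+\binom{t}{2}-1=m-1$ vertices at each step, so a fresh subdivision vertex is always available.
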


For sparser graphs, we use the following lemma to find denser $1$-minors.

\begin{lemma}\label{lemma-denser}
Suppose that $c\ge 64$, $t\ge 1$ and $0<\varepsilon<1$.  If a graph $G$ on $n$ vertices has at least $ct^4n^{1+\varepsilon}$ edges,
then it contains either a graph $G'$ with at least $\frac{c}{32}t^4|V(G')|^{1+\varepsilon+\varepsilon^2}$ edges as a $1$-minor,
or $K_t$ as a $4$-minor.
\end{lemma}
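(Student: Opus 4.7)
The plan is to identify a vertex of maximum degree and analyze the subgraph that $G$ induces on its neighborhood. Depending on whether that neighborhood is dense or sparse, we either extract the desired $1$-minor immediately, or use Lemma~\ref{lemma-jiang} on a suitable auxiliary $1$-minor to produce $K'_t$ and, via Observation~\ref{obs-shallow}, extract $K_t$ as a $4$-minor of $G$.

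First, I would apply a standard cleanup: iteratively delete any vertex whose degree is less than half of the current average. This produces a subgraph of $G$ (which we rename $G$) of minimum degree at least $\tfrac{c}{2}t^4 n^{\varepsilon}$, at the cost of discarding at most half the edges---a loss absorbed into the factor $\tfrac{1}{32}$ appearing in the conclusion. Fix a vertex $v$ of maximum degree $d$ in the cleaned-up graph, so $d \geq 2ct^4 n^{\varepsilon}$ by averaging. Set $H := G[N(v)]$, a graph on $d$ vertices, and branch on the edge density of $H$.

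Case A: if $|E(H)| \geq \tfrac{c}{32}t^4 d^{1+\varepsilon+\varepsilon^2}$, the subgraph $H$ (a fortiori a $0$-minor, and so a $1$-minor, of $G$) witnesses the first conclusion; set $G' := H$.

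Case B: if $|E(H)| < \tfrac{c}{32}t^4 d^{1+\varepsilon+\varepsilon^2}$, the vertices of $N(v)$ contribute many edges outside $N(v)$: summing their $G$-degrees (each at least $\tfrac{c}{2}t^4 n^{\varepsilon}$) and subtracting twice $|E(H)|$ together with the $d$ edges to $v$, the number of edges between $N(v)$ and $V(G) \setminus (N(v) \cup \{v\})$ is large. I would then contract, for each $u \in N(v)$, a star rooted at $u$ consisting of $u$ together with a balanced share of its external neighbors. This produces a $1$-minor $G''$ of $G$ on $O(d)$ vertices (ignoring components far from $v$), retaining enough cross-edges that $|E(G'')| \geq t^2 |V(G'')|^{3/2}$. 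Lemma~\ref{lemma-jiang} then gives $K'_t \subseteq G''$, so $K_t$ is a $1$-topological-minor---hence a $1$-minor---of $G''$. Composing via Observation~\ref{obs-shallow}, $K_t$ is a $4$-minor of $G$.

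The main obstacle is Case B: realizing the balancing so that the constructed $1$-minor $G''$ actually reaches the Jiang-density threshold $t^2 m^{3/2}$ for $m = |V(G'')|$. The tension is between making stars large (which absorbs more vertices and shrinks $|V(G'')|$ but risks losing edges to contractions within stars) and keeping stars small (which preserves edges but does less to reduce the vertex count). Since the cross-edge count is of order $d \cdot ct^4 n^{\varepsilon}$, there is ample slack in principle, but confirming the constants work out across the full parameter range may require a separate case split on the relative sizes of $d$ and $n$, or an iterative refinement of the star-contraction until the density crosses Jiang's threshold.
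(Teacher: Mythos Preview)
There is a genuine gap in Case~B: the claim that the star-contracted minor $G''$ reaches Jiang density $t^2|V(G'')|^{3/2}$ is not a matter of constants but can fail outright. Take a random bipartite graph with parts $A$ of size $n^{1-\varepsilon}$ and $B$ of size roughly $n$, with edge probability $p = ct^4 n^{2\varepsilon-1}$; then $B$-vertices have degree about $ct^4 n^{\varepsilon}$, $A$-vertices have degree about $ct^4 n^{2\varepsilon}$, and $|E(G)| \approx ct^4 n^{1+\varepsilon}$. The maximum-degree vertex $v$ lies in $A$, so $N(v)\subseteq B$ is independent and $H$ is empty --- you are forced into Case~B. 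But each external neighbour $a\in A\setminus\{v\}$ of some $u\in N(v)$ has in expectation only $dp=(ct^4)^2 n^{4\varepsilon-1}$ neighbours inside $N(v)$; for $\varepsilon<1/4$ this is $o(1)$, so with high probability every such $a$ has a \emph{unique} neighbour in $N(v)$. Absorbing $a$ into that unique star contributes no edge to $G''$, and since $A$ is independent there are no leaf--leaf edges either. The resulting $G''$ is essentially edgeless, and Lemma~\ref{lemma-jiang} gives nothing. Choosing $v\in B$ instead does not help (the analogous expectation is $(ct^4)^2 n^{3\varepsilon-1}$), and iterating the contraction would only deepen the minor beyond the allowed depth.

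The paper's proof is global rather than local. After passing to a spanning bipartite subgraph $G_1$ with parts $|A|\le n/2\le |B|$ and minimum degree at least $\tfrac{c}{2}t^4n^{\varepsilon}$, it randomly samples $A'\subseteq A$ with probability $n^{-\varepsilon}$, so that $|A'|<n^{1-\varepsilon}$ while most vertices of $B$ keep at least $\tfrac{c}{4}t^4$ neighbours in $A'$. It then processes these $\Theta(n)$ vertices of $B$ one by one, building a $1$-minor on $A'$: for each such $v$ with $N_v=N(v)\cap A'$, either the current minor restricted to $N_v$ already has minimum degree $\ge |N_v|/2$ --- in which case $N_v\cup\{v\}$ carries at least $|N_v|^2/4\ge t^2|N_v|^{3/2}$ edges and Lemma~\ref{lemma-jiang} yields $K_t$ as a $4$-minor --- or some $w\in N_v$ has low degree and contracting $vw$ adds at least $|N_v|/2\ge\tfrac{c}{8}t^4$ fresh edges. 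If the process runs to completion, the minor has at least $\tfrac{c}{32}t^4 n$ edges on fewer than $n^{1-\varepsilon}$ vertices, giving the first outcome. The essential point your plan misses is that the \emph{entire} $B$-side, of order $n$, is used to deposit edges into the minor; a single neighbourhood of size $d$ simply does not see enough of the graph when $d\approx n^{\varepsilon}$ and $\varepsilon$ is small.
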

\begin{proof}
Note that removing a vertex of degree at most $ct^4n^\varepsilon$ from $G$ results in a graph on $n-1$ vertices and with
at least $ct^4(n-1)n^{\varepsilon}\ge ct^4(n-1)^{1+\varepsilon}$ edges; hence, without loss of generality we can assume that the minimum
degree of $G$ is at least $ct^4n^\varepsilon$.  Consequently, we have 
\begin{equation}\label{eq-lown}
n\ge ct^4n^\varepsilon\text{ and }n^{1-\varepsilon}\ge ct^4.
\end{equation}

Let $A\subseteq V(G)$ be chosen so that the number of edges of $G$ with exactly one end in $A$ is as large as possible, let $B=V(G)\setminus V(A)$
and let $G_1$ be the spanning bipartite subgraph of $G$ consisting of edges of $G$ with exactly one end in $A$.
Consider any vertex $v\in V(G)$, and let $A_v$ be the symmetric difference of $A$ and $\{v\}$.
Observe that $G$ contains at least $|E(G_1)|-\deg_{G_1}(v)+(\deg_G(v)-\deg_{G_1}(v))$ edges with exactly one end in $A_v$,
and by the choice of $A$, it follows that $\deg_{G_1}(v)\ge\frac{1}{2}\deg_G(v)$.
Hence, every vertex of $G_1$ has degree at least $\frac{c}{2}t^4n^\varepsilon$.
By symmetry, we can assume that $|A|\le n/2\le|B|$.

Let $p=n^{-\varepsilon}$.  Let $A'$ be a subset of $A$ obtained by choosing each vertex independently at random with probability $p$.
Since $|A|\le n/2$,  Lemma~\ref{lem-chernoff} with $r=n/2$ and (\ref{eq-lown}) implies that the probability that $|A'|\ge 2rp=n^{1-\varepsilon}$ is less than
\begin{equation}\label{eq-pra}
\exp\Bigl(-\frac{3n^{1-\varepsilon}}{16}\Bigr)\le\exp\Bigl(-\frac{3ct^4}{16}\Bigr)<1/3.
\end{equation}

Consider a vertex $v\in B$.  The expected number of neighbors of $B$ in $A'$ is at least $\delta(G_1)p\ge \frac{c}{2}t^4$, and by Lemma~\ref{lem-chernoff},
the probability that the number of neighbors of $B$ in $A'$ is less than $\frac{c}{4}t^4$ is at most
$$\exp\Bigl(-\frac{3ct^4}{56}\Bigr)<1/3.$$
Let $B_1$ consist of the vertices of $B$ with less than $\frac{c}{4}t^4$ neighbors in $A'$; the expected size of $B_1$ is at most $|B|/3$.
By Lemma~\ref{lemma-markov}, the probability that $|B_1|\ge |B|/2$ is at most $2/3$.  Let $B'=B\setminus B_1$ consist of the vertices of $B$ with
at least $\frac{c}{4}t^4$ neighbors in $A'$; hence,
\begin{equation}\label{eq-prb}
\text{Prob}[|B'|\le |B|/2]\le 2/3.
\end{equation}

By (\ref{eq-pra}) and (\ref{eq-prb}), we have $|A'|<n^{1-\varepsilon}$ and $|B'|>|B|/2\ge n/4$ with non-zero probability;
let us fix a set $A'\subset A$ of size less than $n^{1-\varepsilon}$ such that the set $B'\subseteq B$ of vertices of $B$
with at least $\frac{c}{4}t^4$ neighbors in $A'$ has size greater than $n/4$.

We now form a $1$-minor $G'$ of $G_1$ as follows.  Let the vertex set of $G'$ be $A'$ and initially, let the edge set of $G'$ be empty.
We process each vertex $v\in B'$ in turn.  Let $N_v$ be the set of neighbors of $v$ in $A'$.  If $G'[N_v]$ has a vertex $w$ of degree
at most $|N_v|/2-1$, then we add edges between $w$ and all other vertices of $N_v$ to $G'$ (this corresponds to contracting the edge $vw$ of $G_1$)
and continue processing the remaining vertices of $B'$.  Otherwise, the construction stops.
Note that $G'$ is obtained from $G_1$ by contracting edges of a star forest with centers of the stars contained in $A'$.

Let us consider the case that the construction stops while processing a vertex $v\in B'$.  In that case, $G'[N_v]$ has minimum degree
at least $(|N_v|-1)/2$.  Let $N=N_v\cup \{v\}$ and let $H$ be the graph consisting of $G'[N_v]$ and of the vertex $v$ adjacent to all
vertices of $N_v$.  Observe that $H$ has minimum degree at least $|N|/2$, and that $H$ is a $1$-minor of $G$.
By the choice of $B'$, we have $|V(H)|=|N|>\frac{c}{4}t^4$.
The number of edges of $H$ is at least $|N|^2/4=\frac{\sqrt{|N|}}{4}|N|^{3/2}\ge \frac{\sqrt{c}}{8}t^2|N|^{3/2}\ge t^2|N|^{3/2}$ edges,
since $c\ge 64$.
By Lemma~\ref{lemma-jiang}, $H$ contains $K'_t$ as a subgraph.  By Observation~\ref{obs-shallow}, this gives a $4$-minor of $K_t$ in $G$.
Hence, the second outcome of Lemma~\ref{lemma-denser} holds.

Finally, suppose that all vertices of $B'$ are processed.  Recall that each vertex of $B'$ has at least
$\frac{c}{4}t^4$ neighbors in $A'$; hence, for each vertex of $B'$, we added at least $\frac{c}{8}t^4$ edges to $G'$.
It follows that $G'$ has at least $\frac{c}{8}t^4|B'|\ge \frac{c}{32}t^4n$ edges.
Let us recall that $|V(G')|=|A'|<n^{1-\varepsilon}$, and thus $n\ge|V(G')|^{1/(1-\varepsilon)}$.
Consequently, $G'$ is a $1$-minor of $G$ with at least
$$\frac{c}{32}t^4n\ge \frac{c}{32}t^4|V(G')|^{1/(1-\varepsilon)}=\frac{c}{32}t^4|V(G')|^{1+\varepsilon+\frac{\varepsilon^2}{1-\varepsilon}}\ge\frac{c}{32}t^4|V(G')|^{1+\varepsilon+\varepsilon^2}$$
edges, as required in the first outcome of Lemma~\ref{lemma-denser}.
\end{proof}

Let us remark that since Lemma~\ref{lemma-jiang} gives a $1$-subdivision rather than a $1$-minor, we can improve
the second outcome of Lemma~\ref{lemma-denser} to obtain a $3$-minor of $K_t$ rather than a $4$-minor,
and similarly we could improve other bounds in this section.  Nevertheless, the bounds that we obtain in this way
still are not likely to be close to optimal, and thus we do not go through this extra effort to improve them.

We now iterate Lemma~\ref{lemma-denser}, using Observation~\ref{obs-shallow}.

\begin{corollary}\label{cor-iter}
Suppose that $m\ge 1$, $t\ge 1$ and $0<\varepsilon<1$.
If a graph $G$ on $n$ vertices has at least $2\cdot32^mt^4n^{1+\varepsilon}$ edges,
then it contains either a graph $G'$ with at least $t^4|V(G')|^{1+\varepsilon+m\varepsilon^2}$ edges as a $4^{m-1}$-minor,
or $K_t$ as a $4^m$-minor.
\end{corollary}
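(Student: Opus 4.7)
The plan is to iterate Lemma~\ref{lemma-denser} exactly $m$ times, combining it with Observation~\ref{obs-shallow} to control the minor depth. I would establish by induction on $i\in\{0,1,\ldots,m\}$ the following invariant: either $G$ already contains $K_t$ as a $4^j$-minor for some $j\le i$, or there is a graph $G_i$ (with $G_0=G$) that is a $d_i$-minor of $G$ with $d_0=0$ and $d_i\le 4^{i-1}$ for $i\ge 1$, and such that
\begin{equation*}
|E(G_i)|\;\ge\;2\cdot 32^{m-i}\,t^4\,|V(G_i)|^{1+\varepsilon+i\varepsilon^2}.
\end{equation*}
The base case $i=0$ is exactly the hypothesis of the corollary. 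Taking $i=m$ at the end of the induction delivers the claim: either $K_t$ appears as a $4^j$-minor with $j\le m$, hence as a $4^m$-minor, or $G_m$ is a $4^{m-1}$-minor of $G$ with $|E(G_m)|\ge 2t^4|V(G_m)|^{1+\varepsilon+m\varepsilon^2}\ge t^4|V(G_m)|^{1+\varepsilon+m\varepsilon^2}$.

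For the inductive step from $i$ to $i+1$ (when no shallow $K_t$-minor has yet been produced), I would rewrite the edge bound as
\begin{equation*}
|E(G_i)|\;\ge\;\bigl(2\cdot 32^{m-i}|V(G_i)|^{i\varepsilon^2}\bigr)\,t^4\,|V(G_i)|^{1+\varepsilon},
\end{equation*}
and apply Lemma~\ref{lemma-denser} to $G_i$ with the parameter $c:=2\cdot 32^{m-i}|V(G_i)|^{i\varepsilon^2}$, which satisfies $c\ge 64$ since $m-i\ge 1$ and $|V(G_i)|\ge 1$. If the lemma produces $K_t$ as a $4$-minor of $G_i$, the $K_t$-alternative of the invariant is triggered. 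Otherwise, the resulting $1$-minor $G_{i+1}$ of $G_i$ satisfies $|E(G_{i+1})|\ge\frac{c}{32}t^4|V(G_{i+1})|^{1+\varepsilon+\varepsilon^2}$, and because $|V(G_{i+1})|\le|V(G_i)|$ I can absorb $|V(G_i)|^{i\varepsilon^2}\ge|V(G_{i+1})|^{i\varepsilon^2}$ into the $|V(G_{i+1})|$ factor, recovering
\begin{equation*}
|E(G_{i+1})|\;\ge\;2\cdot 32^{m-i-1}\,t^4\,|V(G_{i+1})|^{1+\varepsilon+(i+1)\varepsilon^2}.
\end{equation*}

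The only delicate point, and the part I expect to be the main obstacle, is the depth bookkeeping via Observation~\ref{obs-shallow}. A $1$-minor of a $d_i$-minor of $G$ is a $(d_i+(2d_i+1))=(3d_i+1)$-minor of $G$, and a $4$-minor of a $d_i$-minor of $G$ is a $(9d_i+4)$-minor of $G$. The inductive claim then reduces to the elementary inequalities $3\cdot 4^{i-1}+1\le 4^i$ and $9\cdot 4^{i-1}+4\le 4^{i+1}$, together with the boundary check that at $i=0$ (where $d_0=0$) a $1$-minor of $G$ is a $4^0$-minor and a $4$-minor of $G$ is a $4^1$-minor. Once this depth accounting is in place, the coefficient $2\cdot 32^{m-i}$ decays by exactly the factor $32$ that Lemma~\ref{lemma-denser} loses at each step, and the exponent grows by exactly the $\varepsilon^2$ supplied by the rewriting trick; nothing further is required.
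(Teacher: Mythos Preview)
Your proof is correct and is exactly the argument the paper has in mind: the paper's own ``proof'' is the single sentence ``We now iterate Lemma~\ref{lemma-denser}, using Observation~\ref{obs-shallow},'' and you have spelled out precisely that iteration, including the clean trick of keeping $\varepsilon$ fixed while absorbing the surplus $|V(G_i)|^{i\varepsilon^2}$ into the constant $c$ (which also sidesteps any need for $\varepsilon+i\varepsilon^2<1$). Your depth bookkeeping via $3d_i+1\le 4^i$ and $9d_i+4\le 4^{i+1}$ is exactly what Observation~\ref{obs-shallow} yields, so nothing is missing.
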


By Lemma~\ref{lemma-jiang} and Observation~\ref{obs-shallow}, the first outcome of Corollary~\ref{cor-iter} implies
the second one when $1+\varepsilon+m\varepsilon^2\ge 3/2$.  Hence, we obtain the following.

\begin{corollary}\label{cor-iter2}
Suppose that $0<\varepsilon\le 1$ and let $m=\left\lceil\frac{1}{2\varepsilon^2}\right\rceil$.
If a graph on $n$ vertices has at least $2\cdot 32^mt^4n^{1+\varepsilon}$ edges,
then it contains $K_t$ as a $4^m$-minor.
\end{corollary}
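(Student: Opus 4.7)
The plan is to combine Corollary~\ref{cor-iter} with Lemma~\ref{lemma-jiang} and Observation~\ref{obs-shallow}, exploiting the choice $m=\lceil 1/(2\varepsilon^2)\rceil$ to push the exponent $1+\varepsilon+m\varepsilon^2$ past the threshold $3/2$ needed by Lemma~\ref{lemma-jiang}. First I would apply Corollary~\ref{cor-iter} directly. Its second outcome already gives what we want, so we only need to handle its first outcome, namely a $4^{m-1}$-minor $G'$ of $G$ with at least $t^4|V(G')|^{1+\varepsilon+m\varepsilon^2}$ edges.

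Next, I would use the choice of $m$: since $m\varepsilon^2\ge 1/2$, we have $1+\varepsilon+m\varepsilon^2\ge 3/2$, so (trivially since $|V(G')|\ge 1$) the number of edges of $G'$ is at least $t^4|V(G')|^{3/2}=(t^2)^2|V(G')|^{3/2}\ge t^2|V(G')|^{3/2}$. Then Lemma~\ref{lemma-jiang} (applied with parameter $t$) yields $K'_t$ as a subgraph of $G'$. Contracting, in $K'_t$, each subdivision vertex into one of its two neighbors exhibits $K_t$ as a $1$-minor of $K'_t$, and since $K'_t$ is a subgraph (hence $0$-minor) of $G'$, Observation~\ref{obs-shallow} gives that $K_t$ is a $1$-minor of $G'$.

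Finally, combining this with the fact that $G'$ is a $4^{m-1}$-minor of $G$, another application of Observation~\ref{obs-shallow} (with $d_1=4^{m-1}$ and $d_2=1$) shows that $K_t$ is a $(4^{m-1}+1\cdot(2\cdot 4^{m-1}+1))$-minor, i.e.\ a $(3\cdot 4^{m-1}+1)$-minor, of $G$. Since $3\cdot 4^{m-1}+1\le 4\cdot 4^{m-1}=4^m$ for $m\ge 1$ (and $m\ge 1$ holds whenever $0<\varepsilon\le 1$), we conclude that $K_t$ is a $4^m$-minor of $G$, as required.

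There is essentially no hard step here: the result is a routine assembly of the preceding lemmas, and the only point worth double-checking is the depth arithmetic in Observation~\ref{obs-shallow}, which is why I would spell out the two-stage application (first to lift $K_t$ from a minor of $K'_t$ to a minor of $G'$, then from a minor of $G'$ to a minor of $G$) rather than trying to combine them into one step.
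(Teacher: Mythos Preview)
Your proposal is correct and follows exactly the route the paper takes: apply Corollary~\ref{cor-iter}, note that the choice of $m$ forces $1+\varepsilon+m\varepsilon^2\ge 3/2$ so that Lemma~\ref{lemma-jiang} applies to the first outcome, and then use Observation~\ref{obs-shallow} to lift the resulting shallow $K_t$-minor of $G'$ to a $4^m$-minor of $G$. The paper compresses all of this into a single sentence preceding the statement, but your explicit depth arithmetic ($3\cdot 4^{m-1}+1\le 4^m$) is exactly what underlies that sentence.
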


Theorem~\ref{thm-lacli} is then easily obtained by carefully choosing the parameters.
\begin{proof}[Proof of Theorem~\ref{thm-lacli}]
Let $m=\left\lceil\frac{18}{\varepsilon^2}\right\rceil$, $d=4^m$ and let $n_0\ge 0$ be the smallest integer
such that $n_0^{\varepsilon/6}\ge 2\cdot 32^m$.  Let $t=\lfloor n^{\varepsilon/6}\rfloor$.  Then $G$ has at least
$n^{1+\varepsilon}\ge 2\cdot 32^mt^4n^{1+\varepsilon/6}$ edges and the result follows from Corollary~\ref{cor-iter2}.
\end{proof}

To establish subexponential bounds on expansion, we need another consequence of Corollary~\ref{cor-iter2}.

\begin{theorem}\label{thm-iter3}
Suppose that $2/3<\delta\le 1$, $0<\mu\le 1$ and $b>1$.
There exists $k_0\ge 0$ such that for every $k\ge k_0$,
if $t=\bigl\lfloor e^{\frac{1}{6}k^\delta}\bigr\rfloor$ and $G$ is a graph with $n\le b^k$ vertices and with at least $e^{k^\delta}n$ edges, then
$G$ contains $K_t$ as a $t^\mu$-minor.
\end{theorem}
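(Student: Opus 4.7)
The plan is to apply Corollary~\ref{cor-iter2} with an exponent $\varepsilon$ chosen to scale carefully with $k$. Since we are told $n\le b^k$, a factor $n^\varepsilon$ costs at most $b^{k\varepsilon}=e^{(\log b)k\varepsilon}$, so to keep this comparable with the available budget of $e^{k^\delta}$ in the edge density I will set $\varepsilon=k^{\delta-1}/c$ for a constant $c>3\log b$ to be fixed; since $\delta\le 1$, this gives $\varepsilon\in(0,1]$ for all $k$ large enough.

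With this choice of $\varepsilon$ we have $m=\lceil 1/(2\varepsilon^2)\rceil=O(k^{2-2\delta})$, and two things need to be checked. First, the depth $4^m$ produced by Corollary~\ref{cor-iter2} must be at most $t^\mu$: this amounts to $m\log 4\le\mu\log t=\frac{\mu}{6}k^\delta+O(1)$, and since the hypothesis $\delta>2/3$ forces $2-2\delta<\delta$, the left-hand side $O(k^{2-2\delta})$ is dominated by the right-hand side for all sufficiently large $k$; hence a $K_t$-minor of depth $4^m$ is in particular a $t^\mu$-minor. Second, the requirement $|E(G)|\ge 2\cdot 32^m\,t^4\,n^{1+\varepsilon}$ of Corollary~\ref{cor-iter2} should follow from $|E(G)|\ge e^{k^\delta}n$. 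Using $t^4\le e^{(2/3)k^\delta}$, $n^\varepsilon\le e^{(\log b/c)k^\delta}$, and $32^m=e^{O(k^{2-2\delta})}=e^{o(k^\delta)}$, the right-hand side is bounded by
\[
2\,e^{o(k^\delta)}\cdot e^{\frac{2}{3}k^\delta}\cdot e^{\frac{\log b}{c}k^\delta}\cdot n,
\]
and my choice $c>3\log b$ makes $\frac{2}{3}+\frac{\log b}{c}<1$, so for all sufficiently large $k$ the whole expression is at most $e^{k^\delta}n$.

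The main obstacle is the tight parameter balancing: the $e^{k^\delta}$ edge budget must simultaneously absorb the factor $t^4$ (which already spends $\frac{2}{3}k^\delta$) and the penalty $n^\varepsilon$ from lifting the hypothesis of Corollary~\ref{cor-iter2} back to the given $n\le b^k$, while leaving room for the subpolynomial factor $2\cdot 32^m$. This is exactly what dictates both the choice $c>3\log b$ and the hypothesis $\delta>2/3$, the latter being what keeps the depth $4^m=e^{O(k^{2-2\delta})}$ smaller than the target $t^\mu=e^{\Theta(k^\delta)}$.
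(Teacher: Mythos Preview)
Your proposal is correct and follows essentially the same approach as the paper: you set $\varepsilon=k^{\delta-1}/c$ with $c>3\log b$, while the paper takes the specific choice $c=6\log b$, and in both cases the key observations are that $2\cdot 32^m=e^{O(k^{2-2\delta})}$ is dominated by $t^\mu=e^{\Theta(k^\delta)}$ precisely because $\delta>2/3$, and that the edge budget $e^{k^\delta}$ absorbs $t^4\cdot n^\varepsilon\le e^{(2/3+\log b/c)k^\delta}$ with room to spare for the $2\cdot 32^m$ factor. The paper's version is slightly more explicit about the constants (e.g.\ it chains the inequalities as $e^{k^\delta}n\ge t^5 e^{k^\delta/6}n\ge t^5 n^{1+\varepsilon}\ge t^\mu t^4 n^{1+\varepsilon}\ge 2\cdot 32^m t^4 n^{1+\varepsilon}$), but the structure is identical.
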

\begin{proof}
Let $\varepsilon=\frac{1}{6k^{1-\delta}\log b}$, $m=\left\lceil\frac{1}{2\varepsilon^2}\right\rceil$ and $k'_0=(\log 64)^{1/\delta}$.
Note that
\begin{equation}\label{eq-upm}
2\cdot 32^m\le 64\cdot 32^{\frac{1}{2\varepsilon^2}}=64e^{(18\log 32\log^2b)k^{2-2\delta}}.
\end{equation}
Furthermore, for $k\ge k'_0$, we have
\begin{align}
e^{\frac{1}{6}k^\delta}&\ge 2\nonumber\\
t=\bigl\lfloor e^{\frac{1}{6}k^\delta}\bigr\rfloor&\ge e^{\frac{1}{6}k^\delta}-1\ge \frac{1}{2}e^{\frac{1}{6}k^\delta}\ge 1\nonumber\\
t^\mu&\ge \frac{1}{2^\mu}e^{\frac{\mu}{6}k^\delta}.\label{eq-lowtmu}
\end{align}
Choose $k_0\ge k'_0$ large enough that $t^\mu\ge 2\cdot 32^m$
for every $k\ge k_0$; this is possible by (\ref{eq-upm}) and (\ref{eq-lowtmu}), since $\delta > 2 - 2\delta$.

Since $n\le b^k$, we have $$n^\varepsilon\le b^{\varepsilon k}=b^{\frac{k}{6k^{1-\delta}\log b}}=e^{k^\delta/6},$$
and thus $$|E(G)|\ge e^{k^\delta}n\ge t^5e^{k^\delta/6}n\ge t^5n^{1+\varepsilon}\ge t^\mu t^4n^{1+\varepsilon}\ge 2\cdot 32^mt^4n^{1+\varepsilon}.$$
By Corollary~\ref{cor-iter2}, $G$ contains $K_t$ as a $4^m$-minor.
Note that $4^m<2\cdot 32^m\le t^\mu$ by the choice of $k_0$, and thus $G$ contains $K_t$ as a $t^\mu$-minor as required.
\end{proof}

\section{Sublinear separators and expansion}\label{sec-subexpexp}

Now, let us turn our attention to sublinear separators.
For $\alpha>0$, a graph $G$ is an \emph{$\alpha$-expander} if for every $S\subseteq V(G)$ of size at most $|V(G)|/2$, there
exist at least $\alpha|S|$ edges of $G$ with exactly one end in $S$.  Random graphs are asymptotically almost surely expanders.

\begin{lemma}[Bollob\'as~\cite{bolo}]\label{lemma-exexp}
There exists $n_0$ such that for every even $n\ge n_0$, there exists a $3$-regular $\frac{3}{20}$-expander on $n$ vertices.
\end{lemma}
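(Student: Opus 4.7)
The plan is to invoke the probabilistic method via the configuration model for random $3$-regular graphs, which is the standard route for results of this type. Fix an even $n$ and take $3n$ labelled half-edges partitioned into $n$ groups of three (one group per vertex). Choose a uniformly random perfect matching on the half-edges; this produces a random $3$-regular multigraph $G$. A well-known computation using the Poisson approximation for loops and multi-edges shows that $G$ is simple with probability bounded below by an absolute positive constant $c_0>0$ for all large enough $n$. Therefore, if we can show that the probability that $G$ fails to be a $\tfrac{3}{20}$-expander tends to $0$ as $n \to \infty$, then for all sufficiently large even $n$ a $3$-regular $\tfrac{3}{20}$-expander on $n$ vertices exists.

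The heart of the argument is a union-bound estimate. For each $S \subseteq V(G)$ with $|S|=s$, let $k$ denote the number of edges with exactly one end in $S$; we want to upper-bound the probability that $k < \tfrac{3}{20}s$. Conditioning on which $k$ of the $3s$ half-edges incident with $S$ cross to $V(G)\setminus S$, the number of configurations on $3n$ half-edges that realize a given $k$ equals
\begin{equation*}
\binom{3s}{k}\binom{3(n-s)}{k}k!\,(3s-k-1)!!\,(3(n-s)-k-1)!!,
\end{equation*}
divided by the total $(3n-1)!!$. I would sum this over $k$ from $0$ up to $\lfloor \tfrac{3}{20}s\rfloor$ (subject to the parity constraint forced by $3s$), and then multiply by $\binom{n}{s}\le(en/s)^s$ and sum over $s$ from $1$ to $\lfloor n/2\rfloor$.

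The main calculation is then routine but delicate: apply Stirling's formula to each double factorial, extract the dominant exponential factor as a function of the scaled variables $\sigma = s/n$ and $\kappa = k/s$, and verify that for every $\sigma\in(0,\tfrac{1}{2}]$ and every $\kappa\le \tfrac{3}{20}$ the resulting exponent is strictly negative and bounded away from $0$; this yields a bound of the form $e^{-\Omega(n)}$ on the failure probability per $s$ in the ``bulk'' regime. Small $s$ (say $s=O(1)$) is handled separately and trivially (a single vertex has $3\ge \tfrac{3}{20}$ outgoing edges, and similarly for $s=2,3,\ldots$), and $s$ close to $n/2$ uses the symmetry between $S$ and its complement.

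The main obstacle is the verification in the bulk regime: the constant $\tfrac{3}{20}$ is not arbitrary, and one must check that the entropy term $H(\sigma)$ coming from $\binom{n}{s}$ is dominated by the configuration-counting exponent for \emph{every} $\sigma \in (0,\tfrac{1}{2}]$ at $\kappa = \tfrac{3}{20}$. Once this single-variable inequality is verified (either analytically by monotonicity in $\kappa$ and a one-dimensional optimization in $\sigma$, or numerically at a few critical points), summing over $s$ gives a failure probability $o(1)$, which together with the constant lower bound on simplicity produces the desired expander for all sufficiently large even $n$.
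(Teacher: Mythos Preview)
The paper does not prove this lemma; it is stated with a citation to Bollob\'as and used as a black box. So there is no ``paper's own proof'' to compare against. Your outline via the configuration model and a first-moment/union-bound computation is indeed the standard route, and is essentially what the cited reference does.

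That said, your sketch has a soft spot. The ``small $s$'' regime is not as trivial as you suggest: the deterministic observation that a single vertex has three boundary edges only covers $s\le 3$ or so, while the union bound over $\binom{n}{s}$ sets is already nontrivial for, say, $s=\Theta(\log n)$ or $s=n^{0.1}$. In the standard argument one shows that for a fixed set $S$ of size $s$ the probability of having at most $\tfrac{3}{20}s$ crossing half-edges is at most $(Cs/n)^{cs}$ for suitable absolute constants, which beats $(en/s)^s$ for all $1\le s\le \varepsilon n$; the bulk regime $\varepsilon n\le s\le n/2$ then gets the $e^{-\Omega(n)}$ bound you describe. You should also note that the exponent you extract is monotone in $\kappa$, so it suffices to check it at $\kappa=\tfrac{3}{20}$; and that the entropy-vs-configuration comparison at $\sigma=\tfrac12$ is where the constant $\tfrac{3}{20}$ is actually determined. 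None of this is wrong in your proposal, but ``handled separately and trivially'' undersells the work required for $s=o(n)$.
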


Clearly, expanders do not have sublinear-size separations.  This can be extended to their bounded-depth subdivisions.

\begin{lemma}\label{lemma-expsep}
Let $\alpha>0$ be a real number and let $n,m\ge 1$ be integers.  Let $G'$ be obtained from a $3$-regular $\alpha$-expander $G$
on $n$ vertices by subdividing each edge at most $m$ times, and let $n'=|V(G')|$.  Any balanced separation in $G'$ has size at least $\frac{n'}{3(1+3m/2)(6/\alpha+2)}$.
\end{lemma}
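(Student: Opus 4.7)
The plan is to fix any balanced separation $(A,B)$ of $G'$ of size $s = |V(A) \cap V(B)|$ and to derive a lower bound on $s$ by projecting the separation down to the expander $G$. Set $X_A = V(A) \setminus V(B)$, $X_B = V(B) \setminus V(A)$ and $S = V(A) \cap V(B)$; balance gives $|X_A|, |X_B| \ge n'/3 - s$. Project onto the original vertex set as $V_A = V(G) \cap X_A$, $V_B = V(G) \cap X_B$, $V_S = V(G) \cap S$. By symmetry between the two sides of the separation, assume without loss of generality that $|V_A| \le |V_B|$, so $|V_A| \le n/2$.

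The first main step is to bound $|V_A|$ using expansion. Any edge of $G$ leaving $V_A$ goes either to $V_S$ (at most $3|V_S|$ such edges, since $G$ is $3$-regular) or to $V_B$. In the latter case the subdivided path $P_e$ in $G'$ runs from $X_A$ to $X_B$ and so must cross $S$ at an internal, and hence subdivision, vertex; distinct edges of $G$ use distinct subdivision $S$-vertices, so there are at most $s - |V_S|$ such edges. The $\alpha$-expansion of $G$ gives $\alpha|V_A| \le (s - |V_S|) + 3|V_S| = s + 2|V_S| \le 3s$, so $|V_A| \le 3s/\alpha$.

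The second, more delicate, step counts subdivision vertices in $X_A$. For each $e \in E(G)$, split the subdivision vertices of $P_e$ that lie in $X_A$ into maximal consecutive blocks along $P_e$. Each block has two boundary vertices (its predecessor and successor on $P_e$); since a boundary is adjacent to $X_A$ but is not itself in $X_A$, it must be either a subdivision $S$-vertex or an original endpoint of $e$ that lies in $V_A \cup V_S$. A subdivision $S$-vertex lies on a unique $P_e$ and bounds at most two blocks, while a vertex of $V_A \cup V_S$ is an endpoint of three paths $P_e$ and bounds at most one block on each of them. Counting (block, boundary) pairs in two ways yields $\#\text{blocks} \le (s - |V_S|) + \tfrac{3}{2}(|V_A| + |V_S|)$, and since each block contains at most $m$ subdivision vertices we obtain $|X_A| \le |V_A|(1+3m/2) + (3m/2)s$ after using $|V_S| \le s$.

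Plugging $|V_A| \le 3s/\alpha$ into this bound and comparing with $|X_A| \ge n'/3 - s$ gives $n'/3 \le s[1 + 3(1+3m/2)/\alpha + 3m/2]$, which is at most $s(1+3m/2)(6/\alpha+2)$; rearranging yields the claimed lower bound on $s$. The main obstacle is the block-counting step: a naive bound of $m$ subdivision vertices per edge of $G$ yields only $|X_A| \le 3mn/2$ with no dependence on $s$, so it is essential to argue that each block's boundary must live in the small sets of subdivision $S$-vertices or of original vertices in $V_A \cup V_S$, which is what forces the number of blocks to be linear in $s$ and $|V_A|$.
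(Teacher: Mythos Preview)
Your proof is correct and in fact yields a slightly sharper constant than the paper (you obtain $(1+3m/2)(3/\alpha+1)$ before relaxing to $(1+3m/2)(6/\alpha+2)$). The route, however, is genuinely different from the paper's argument.

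The paper does not do any block-counting. Instead it absorbs the separator into $V(G)$ more aggressively: it defines $S\subseteq V(G)$ to contain every original vertex whose image lies in $S'$, and additionally \emph{both} endpoints of any edge of $G$ whose subdivided path meets $S'$ internally; this gives $|S|\le 2|S'|$. With this choice the remaining sets $A,B\subseteq V(G)$ are non-adjacent in $G$, so expansion immediately gives $\alpha|A|\le 3|S|\le 6|S'|$ (this is where the factor $2$ you save is spent). The paper then bounds $|V(A')|$ by the crude but clean observation that every vertex of $A'$ is either a vertex of $H=G[A\cup S]$ or a subdivision vertex on an edge of $H$, whence $|V(A')|\le (1+3m/2)|A\cup S|$.

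So the difference is this: the paper pays a factor $2$ up front to make the projected sets cleanly separated in $G$, and then uses a one-line subgraph bound; you keep the projection na\"{\i}ve (just intersect with $V(G)$), and instead invest the effort in the block-and-boundary counting to control the subdivision vertices of $X_A$. Your argument is more hands-on but buys a tighter constant; the paper's argument is shorter once one sees the right definition of $S$. Both are perfectly valid.
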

\begin{proof}
Let $(A',B')$ be a balanced separation in $G'$ and let $S'=V(A')\cap V(B')$.  Note that
$n'=|V(A')|+|V(B')\setminus S'|\le |V(A')|+2n'/3$, and
thus $|V(A')|\ge n'/3$, and similarly $|V(B')|\ge n'/3$.
For each $v\in V(G)$, let $\tilde{v}$ denote the corresponding vertex of $G'$.
Let $S$ be a minimal subset of $V(G)$ such that 
\begin{itemize}
\item for each $v\in V(G)$, if $\tilde{v}\in S'$, then $v\in S$, and
\item for every path $P'\subseteq G'$ corresponding to an edge $uv\in E(G)$
such that an internal vertex of $P'$ belongs to $S'$, we have $\{u,v\}\subseteq S$.
\end{itemize}
Let $A=\{v\in V(G)\setminus S: \tilde{v}\in V(A')\}$ and $B=\{v\in V(G)\setminus S: \tilde{v}\in V(B')\}$.
Note that $|S|\le 2|S'|$, and that for each two vertices that are connected by a path in $G-S$, the corresponding vertices
are also connected by a path in $G'-S'$.  Consequently, no vertex of $A$ has a neighbor in $B$.  Without loss of generality, we can assume
$|A|\le n/2$, and since $G$ is an $\alpha$-expander, it contains at least $\alpha|A|$ edges with one end in $A$ and the other end in $S$.  Since $G$ is $3$-regular,
we have
\begin{equation}\label{eq-sza}
\alpha|A|\le 3|S|\le 6|S'|.
\end{equation}

Consider a vertex $z\in V(A')$.  If $z=\tilde{v}$ for some $v\in V(G)$, then we have $v\in A\cup S$.  Similarly,
if $z$ is an internal vertex of a path $P'\subseteq G'$ corresponding to an edge $uv\in E(G)$, then $\{u,v\}\subseteq A\cup S$,
as otherwise an end of $P'$ would belong to $V(B')\setminus S'$ and $P'$ would contain an internal vertex belonging to $S'$,
contradicting the choice of $S$.  Let $H$ be the subgraph of $G$ induced by $A\cup S$.  We observed that each vertex
of $A'$ either corresponds to a vertex of $H$, or it is contained in a path of $G'$ replacing an edge of $H$.
Since $H$ has maximum degree at most $3$, it follows that
$$|V(A')|\le |V(H)|+m|E(H)|\le |V(H)|+\frac{3}{2}m|V(H)|=(1+3m/2)|A\cup S|.$$

Since $|A\cup S|=|A|+|S|\le (6/\alpha+2)|S'|$ by (\ref{eq-sza}),
we have $(1+3m/2)(6/\alpha+2)|S'|\ge (1+3m/2)|A\cup S|\ge |V(A')|\ge n'/3$.
Therefore,
$$|S'|\ge \frac{n'}{3(1+3m/2)(6/\alpha+2)},$$
which gives the lower bound on the size of balanced separations in $G'$.
\end{proof}

We are now ready to bound the expansion in small graphs.

\begin{lemma}\label{lemma-smallexp}
Let $\GG$ be a subgraph-closed class of graphs with strongly sublinear separators,
and let $b>1$ be a real number.  There exists $k_0\ge 0$ such that for every $k\ge k_0$, every graph $G\in \GG$ with
at most $b^k$ vertices satisfies $\nabla_k(G)<e^{k^{3/4}}$.
\end{lemma}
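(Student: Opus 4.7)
The plan is a proof by contradiction. Suppose that for arbitrarily large $k$ there exists $G\in\GG$ with $|V(G)|\le b^k$ and $\nabla_k(G)\ge e^{k^{3/4}}$, so $G$ admits a $k$-minor $G_1$ with $|E(G_1)|\ge e^{k^{3/4}}|V(G_1)|$. Since $|V(G_1)|\le |V(G)|\le b^k$, Theorem~\ref{thm-iter3}, applied with its exponent parameter set to $3/4$ and with a parameter $\mu\in(0,1]$ to be fixed shortly, yields $K_t$ as a $t^\mu$-minor of $G_1$, where $t=\lfloor e^{k^{3/4}/6}\rfloor$. Observation~\ref{obs-shallow} then shows that $K_t$ is a $D$-minor of $G$, where $D\le k+t^\mu(2k+1)$.

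Next I would pass from $K_t$ to a $3$-regular $\tfrac{3}{20}$-expander $H$ on $t'\in\{t,t-1\}$ vertices supplied by Lemma~\ref{lemma-exexp}; since $H\subseteq K_t$, the graph $H$ is still a $D$-minor of $G$. The crucial step, which I expect to be the main obstacle, is to upgrade this $D$-minor structure into an honest \emph{subdivision} of $H$ inside $G$ in which each edge of $H$ corresponds to a path of length at most $2D+1$, so that Lemma~\ref{lemma-expsep} becomes applicable. A naive choice of representatives fails because, inside a bag $T_v$ of the minor, the paths from an arbitrarily chosen root to the (up to three) attachment vertices for the edges of $H$ at $v$ may share initial segments, violating internal disjointness. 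To fix this, inside each tree $T_v$ I would choose the root $r_v$ to be the \emph{Steiner median} of these three attachment vertices in $T_v$; that is, the unique vertex at which the minimal subtree spanning the three attachments branches (with obvious degenerate conventions when some of the attachments coincide or are colinear in $T_v$). With this choice the paths from $r_v$ to the three attachments are pairwise internally disjoint, and since the trees $T_v$ for distinct $v$ are themselves vertex-disjoint, concatenating these in-tree paths with the connecting edges $e_{uv}$ produces pairwise internally vertex-disjoint paths of length at most $2D+1$, realizing a subdivision $G''$ of $H$ as a subgraph of $G$.

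Since $G''\subseteq G$ and $\GG$ is subgraph-closed, we have $G''\in\GG$. Applying Lemma~\ref{lemma-expsep} with $\alpha=3/20$ and $m\le 2D$ forces every balanced separation of $G''$ to have size at least $|V(G'')|/(CD)$ for an absolute constant $C$; on the other hand, the strongly sublinear separator hypothesis provides a balanced separation of $G''$ of size at most $c|V(G'')|^\delta$ for fixed $c\ge 1$ and $0\le\delta<1$. Combining these inequalities gives $|V(G'')|^{1-\delta}\le cCD$, and since $|V(G'')|\ge t'\ge t/2$ while $D=O(k\cdot t^\mu)$, this rearranges to $t^{1-\delta-\mu}\le O(k)$.

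To close the argument, I fix $\mu\in(0,1-\delta)$ at the very beginning; this is possible because $\delta<1$, and it ensures that the exponent $1-\delta-\mu$ is strictly positive. Since $t=\lfloor e^{k^{3/4}/6}\rfloor$, the left-hand side of the last inequality grows faster than any polynomial in $k$, whereas the right-hand side is linear in $k$. Choosing $k_0$ large enough to absorb the thresholds required by Theorem~\ref{thm-iter3}, by Lemma~\ref{lemma-exexp}, and by the final inequality then yields the desired contradiction for every $k\ge k_0$, proving the claimed bound $\nabla_k(G)<e^{k^{3/4}}$.
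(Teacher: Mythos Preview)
Your proof is correct and follows essentially the same route as the paper's: obtain a dense $k$-minor $G_1$, apply Theorem~\ref{thm-iter3} to find a shallow $K_t$-minor, compose depths via Observation~\ref{obs-shallow}, pass to a $3$-regular expander $H\subseteq K_t$, realize $H$ as a bounded-depth subdivision inside $G$, and then play Lemma~\ref{lemma-expsep} off against the strongly sublinear separator hypothesis. Your choice of $\mu\in(0,1-\delta)$ matches the paper's choice $\mu=(1-\psi)/2$.

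Two remarks. First, the step you flag as ``the main obstacle''---upgrading the shallow minor of the $3$-regular $H$ to a subdivision---is exactly the step the paper sweeps under the rug with the single clause ``since $G_2$ is $3$-regular, there exists a graph $G_3\subseteq G$ obtained from $G_2$ by subdividing each edge at most $8kt^\mu$ times.'' Your Steiner-median argument is the standard justification and is correct. Second, your bound of $2D+1$ on the resulting path lengths is off by a factor of two: in a BFS tree of radius $D$ the Steiner median can sit at depth close to $D$, so one of the three legs can have length up to $2D$, giving paths of length at most $4D+1$ (equivalently $m\le 4D$). This is harmless---it only changes the constant $C$ in your inequality $|V(G'')|^{1-\delta}\le cCD$---and in fact the paper's ``$8kt^\mu$'' appears to carry the same slip. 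The asymptotic comparison $t^{1-\delta-\mu}$ versus $O(k)$ is unaffected.
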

\begin{proof}
Let $c>0$ and $0\le\psi<1$ be constants such that $s_\GG(n)\le cn^\psi$ for every $n\ge 0$.
Let $\alpha=\frac{3}{20}$ and let $n_0$ be a constant such that for every even $n\ge n_0$, there exists a $3$-regular $\alpha$-expander on $n$ vertices
(the constant $n_0$ exists by Lemma~\ref{lemma-exexp}).
Let $k_0\ge 1$ be large enough so that Theorem~\ref{thm-iter3} applies with $\delta=3/4$, $\mu=\frac{1-\psi}{2}$ and $b$;
and furthermore, so that any $k\ge k_0$ and $t=\bigl\lfloor e^{\frac{1}{6}k^{3/4}}\bigr\rfloor$
satisfies $t\ge n_0+1$ and $(t-1)^{1-\psi}>126c(1+12kt^\mu)$.

Suppose that for some $k\ge k_0$, there exists $G\in\GG$ with at most $b^k$ vertices satisfying $\nabla_k(G)\ge e^{k^{3/4}}$.
Let $G_1$ be a $k$-minor of $G$ with $n_1$ vertices and at least $e^{k^{3/4}}n_1$ edges.
Note that $n_1\le |V(G)|\le b^k$.
By Theorem~\ref{thm-iter3}, $G_1$ contains $K_t$ as a $t^\mu$-minor.
By Observation~\ref{obs-shallow}, $G$ contains $K_t$ as a $(k+t^\mu(2k+1))$-minor, and thus also as a $4kt^\mu$-minor. 
Let $G_2$ be a $3$-regular $\frac{3}{20}$-expander
with either $t-1$ or $t$ vertices, which exists by Lemma~\ref{lemma-exexp}.  Note that $G_2$ is a $4kt^\mu$-minor of $G$, and since $G_2$
is $3$-regular, there exists a graph $G_3\subseteq G$ obtained from $G_2$ by subdividing each edge at most $8kt^\mu$ times.
Since $\GG$ is subgraph-closed, $G_3$ has a balanced separation of size at most $c|V(G_3)|^\psi$.  On the other hand,
Lemma~\ref{lemma-expsep} implies that every balanced separation in $G_3$ has size at least $\frac{|V(G_3)|}{3(1+3(8kt^\mu)/2)(6/\alpha+2)}=\frac{|V(G_3)|}{126(1+12kt^\mu)}$.
We conclude that
$$c|V(G_3)|^\psi\ge \frac{|V(G_3)|}{126(1+12kt^\mu)},$$
and thus
$$126c(1+12kt^\mu)\ge |V(G_3)|^{1-\psi}\ge (t-1)^{1-\psi}.$$
This is a contradiction by the choice of $k_0$.
\end{proof}

Finally, we combine Lemmas~\ref{lemma-sublind}, \ref{lemma-bexp} and \ref{lemma-smallexp} to obtain the proof of the main result.

\begin{proof}[Proof of Theorem~\ref{thm-main}]
By Lemma~\ref{lemma-sublind}, $\GG$ is fractionally $\Vs$-fragile, and there exists a constant $b_0>1$ such that for any
$0<\varepsilon\le 1$, the class $\CC_\varepsilon$ of all graphs in $\GG$ such that all their components have at most $b_0^{1/\varepsilon}$ vertices is an $\varepsilon$-witness
of the fractional $\Vs$-fragility of $\GG$.  Let $b=b_0^8$.  Let $k_0\ge 1$ be the constant of Lemma~\ref{lemma-smallexp} applied for $\GG$ and $b$.
Let $\gamma=2b^{k_0}$.

For $0<\varepsilon\le 1$ and $k\ge 0$, let $g(\varepsilon,k)=\nabla_k(\CC_\varepsilon)$.  Since $\CC_\varepsilon$ has bounded
component size, it has bounded expansion, and thus $g(\varepsilon,k)$ is finite.  Since for every $0<\varepsilon\le 1$,
every graph in $\GG$ has a fractional $\CC_\varepsilon$-complementary packing of thickness at most $\varepsilon$, the choice of $g$ implies
that $\GG$ is fractionally $(\Be,g)$-fragile.

Let $G'$ be any graph of $\CC_{1/(4k+4)}$. Note that each component of $G'$ has at most $b_0^{4k+4}\le b^k$ vertices, and thus if $k\le k_0$,
we have $\nabla_k(G')\le b^{k_0}$.  On the other hand, if $k>k_0$, then Lemma~\ref{lemma-smallexp} (applied to each component separately)
implies that $\nabla_k(G')<e^{k^{3/4}}$.  In both cases, we have $\nabla_k(G')\le b^{k_0}e^{k^{3/4}}$.
It follows that $$g\left(\frac{1}{4k+4},k\right)=\nabla_k\left(\CC_{1/(4k+4)}\right)\le b^{k_0}e^{k^{3/4}}.$$

By Lemma~\ref{lemma-bexp}, we have $$\nabla_k(\GG)\le 2g\left(\frac{1}{4k+4},k\right)\le 2b^{k_0}e^{k^{3/4}}=\gamma e^{k^{3/4}}$$
for every integer $k\ge 0$, as required.
\end{proof}

\bibliographystyle{siam}
\bibliography{twd}

\end{document}